\newtheorem{theorem}{Theorem}
\newtheorem{prop}{Proposition}
\newtheorem{definition}{Definition}
\newtheorem{assumption}{Assumption}
\begin{document}

\title{Feedback Synthesis for Controllable Underactuated Systems using Sequential Second Order Actions}

\author{\authorblockN{Giorgos Mamakoukas} \and
	\authorblockN{Malcolm A. MacIver} \and
	\authorblockN{Todd D. Murphey}
	\thanks{The authors are with the Department of Mechanical Engineering (Mamakoukas, MacIver and Murphey), with the Department of Biomedical Engineering (MacIver), and the Department of Neurobiology (MacIver), Northwestern University, Evanston, IL, 60208 USA. Emails:\,{\tt\small giorgosmamakoukas@u.northwestern.edu, maciver@northwestern.edu, \newline t-murphey@northwestern.edu}}}
\maketitle

\begin{abstract}
This paper derives nonlinear feedback control synthesis for general control affine systems using second-order actions---the needle variations of optimal control---as the basis for choosing each control response to the current state. A second result of the paper is that the method provably exploits the nonlinear controllability of a system by virtue of an explicit dependence of the second-order needle variation on the Lie bracket between vector fields. As a result, each control decision necessarily decreases the objective when the system is nonlinearly controllable using first-order Lie brackets. Simulation results using a differential drive cart, an underactuated kinematic vehicle in three dimensions, and an underactuated dynamic model of an underwater vehicle demonstrate that the method finds control solutions when the first-order analysis is singular. Moreover, the simulated examples demonstrate superior convergence when compared to synthesis based on first-order needle variations. Lastly, the underactuated dynamic underwater vehicle model demonstrates the convergence even in the presence of a velocity field. 
\end{abstract}
\IEEEpeerreviewmaketitle
\section{Introduction}
With many important applications in aerial or underwater missions, systems are underactuated either by design---in order to reduce actuator weight, expenses or energy consumption---or as a result of technical failures. In both cases, it is important to develop control policies that can exploit the nonlinearities of the dynamics, are general enough for this broad class of systems, and easily computable.
Various approaches to nonlinear control range from steering methods using sinusoid controls\cite{MurraySin}, sequential actions of Lie bracket sequences\cite{murray1994book} and backstepping\cite{kokotovic1992joy,	seto1994control} to perturbation methods\cite{junkins1986asymptotic}, sliding mode control (SMC)\cite{perruquetti2002sliding,utkin2013sliding, xu2008sliding}, intelligent\cite{brown1997intelligent, harris1993intelligent} or hybrid\cite{fierro1999hybrid} control and nonlinear model predictive control (NMPC) methods\cite{allgower2004nonlinear}. These schemes have been successful on well-studied examples including, but not limited to, the rolling disk, the kinematic car, wheeling mobile robots, the Snakeboard, surface vessels, quadrotors, and cranes \cite{bullo2000controllability, nonholonomiccrane,escareno2012trajectory,reyhanoglu1996nonlinear,fang2003nonlinear,toussaint2000tracking,bouadi2007sliding,bouadi2007modelling,chen2013adaptive, nakazono2008vibration, shammas2012analytic, morbidi2007sliding, roy2007closed, becker2010motion, kolmanovsky1995developments,boskovic1999intelligent}. 

The aforementioned methods are not ideal in dealing with controllable systems. In the case of perturbations, the applied controls assume a future of control decisions that did not take the disturbance history into account; backstepping is generally ineffective in the presence of control limits and NMPC methods are typically computationally expensive. SMC methods suffer from chattering, which results in high energy consumption and instability risks by virtue of exciting unmodeled high-frequency dynamics \cite{khalil1996noninear}, intelligent control methods are subject to data uncertainties\cite{el2014intelligent}, while other methods are often case-specific and will not hold for the level of generality encountered in robotics. We address this limitation by using needle variations to compute feedback laws for general nonlinear systems affine in control, discussed next.
\subsection{Needle Variations Advantages to Optimal Control}
In this paper, we investigate using needle variation methods to find optimal control for nonlinear controllable systems. Needle variations consider the sensitivity of the cost function to infinitesimal application of controls and synthesize actions that reduce the objective\cite{aseev2014needle,shaikh2007hybrid}. Such control synthesis methods have the advantage of efficiency in terms of computational effort, making them appropriate for online feedback (similar to other model predictive control methods, such as iLQG\cite{todorov2005generalized}, but with the advantage---as shown here---of having provable formal properties over the entire state space). For time evolving objectives, as in the case of trajectory tracking tasks, controls calculated from other methods (such as sinusoids or Lie brackets for nonholonomic integrators) may be rendered ineffective as the target continuously moves to different states. In such cases, needle variation controls have the advantage of computing actions that directly reduce the cost, without depending on future control decisions. However, needle variation methods, to the best of our knowledge, have not yet considered higher than first-order sensitivities of the cost function. 

We demonstrate analytically later in Section II that, by considering second-order needle variations, we obtain variations that explicitly depend on the Lie brackets between vector fields and, as a consequence, the higher-order nonlinearities in the system. Later, in Section III, we show that, for classically studied systems, such as the differential drive cart, this amounts to being able to guarantee that the control approach is \emph{globally} certain to provide descent at every state, despite the conditions of Brockett's theorem \cite{brockett1983asymptotic} on nonexistence of smooth feedback laws for such systems. 

\subsection{Paper Contribution and Structure}
This paper derives the second-order sensitivity of the cost function with respect to infinitesimal duration of inserted control, which we will refer to interchangeably as the second-order mode insertion gradient or mode insertion Hessian (MIH). We relate the MIH expression to controllability analysis by revealing its underlying Lie bracket structure and present a method of using second-order needle variation actions to expand the set of states for which individual actions that guarantee descent of an objective function can be computed. Finally, we compute an analytical solution of controls that uses the first two orders of needle variations. Due to length constraints, the details of some proofs are shortened to an acceptable length, allowing us to include examples demonstrating the method. 

The content is structured as follows. In Section II, we prove that second-order needle variations guarantee control solutions for systems that are nonlinearly controllable using first-order Lie brackets. In Section III, we present an analytical control synthesis method that uses second-order needle actions. In Section IV, we implement the proposed synthesis method and present simulation results on a controllable, underactuated model of a 2D differential drive vehicle, a 3D controllable, underactuated kinematic rigid body and a 3D underactuated dynamic model of an underwater vehicle. 

\section{Needle Variation Controls based on Non-Linear Controllability}
In this section, we relate the controllability of systems to first- and second-order needle variation actions. After presenting the MIH expression, we reveal its dependence on Lie bracket terms between vector fields. Using this connection, we tie the descent property of needle variation actions to the controllability of a system and prove that second-order needle variation controls can produce control solutions for a wider set of the configuration state space than first-order needle variation methods. As a result, we are able to constructively compute, via an analytic solution, control formulas that are guaranteed to provide descent, provided that the system is controllable with first-order Lie brackets. Generalization to higher-order Lie brackets appears to have the same structure, but that analysis is postponed to future work.

\subsection{Second-Order Mode Insertion Gradient}
Consider a system with state $x : \mathbb{R} \mapsto \mathbb{R}^{N \times 1} $ and control $u : \mathbb{R} \mapsto \mathbb{R}^{M \times 1} $ with control-affine dynamics of the form
\begin{align}\label{dynamics}
f(t,x(t), u(t)) = g(t, x(t)) + h(t, x(t)) u(t),
\end{align} 
where $g(t,x(t))$ is the drift vector field. Further consider a time period $[t_o, t_f]$ and control modes described by
\begin{align}\label{Dynamics}
\dot{x}(t) = 
\begin{cases}
f_1 (x(t), v), & t_0\leq t < \tau \\
f_2 (x(t), u), & \tau - \frac{\lambda}{2}\leq t < \tau + \frac{\lambda}{2} \\
f_1 (x(t), v), & \tau + \frac{\lambda}{2} \leq t \le t_f ,
\end{cases}
\end{align}
where $f_1$ and $f_2$ are the dynamics associated with \textit{default} and \textit{inserted} control $v$ and $u$, respectively. Parameters $\lambda$ and $\tau$ are the duration of the inserted dynamics $f_2$ and the switching time between the two modes. Dynamics of the form \eqref{Dynamics} are typically used in optimal control of hybrid systems to optimize the time scheduling of a-priori known modes\cite{egerstedt2006transition}. Here, we use such dynamics to obtain a new control mode $u$ that will optimally perturb the trajectory of any type of system with a needle action\cite{SAC}. Given a cost function $J$ of the form
\begin{equation}\label{cost}
J(x(t)) = \int_{t_o}^{t_f} l_1(x(t)) \mathrm{d}t + m(x(t_f)),
\end{equation}
 where $l_1(x(t))$ is the running cost and $m(x(t))$ the terminal cost, the mode insertion gradient (MIG) is
\medmuskip = 0.5mu
\begin{align}\label{MIG}
\frac{dJ}{d\lambda_+} = \rho^T (f_2 - f_1).
\end{align}
For brevity, the dependencies of variables are dropped. Although not presented here because of the length of the derivation and its similarity to \cite{caldwell2011switching}, a similar analysis shows that, for dynamics that do not directly depend on the control duration, the mode insertion Hessian (MIH) is given by
\medmuskip = 0.5mu
\begin{align}\label{MIH}
\frac{d^2J}{d\lambda_+^2} =& (f_2 - f_1)^T\Omega(f_2-f_1) + \rho^T(D_xf_2 \cdot f_2 + D_xf_1\cdot f_1 \notag\\
&-2 D_xf_1\cdot f_2) - D_x l_1 \cdot (f_2 - f_1),
\end{align}
where $\rho : \mathbb{R} \mapsto \mathbb{R}^{N \times 1}$ and $\Omega : \mathbb{R} \mapsto \mathbb{R}^{N \times N}$ are the first- and second-order adjoint states (costates). These quantities are calculated from the default trajectory and are given by
\begin{align*}
\dot{\rho} &= -{D_xl_1}^T - D_xf_1^T\rho \\
\dot{\Omega} &= -{D_xf_1}^T\Omega - \Omega D_xf_1 - D_x^2l_1 - \sum_{i=1}^N \rho_i D_x^2f_1^i ,
\end{align*}
that are subject to $\rho(t_f) = D_x m(x(t_f))^T$ and $\Omega(t_f) = D_x^2 m(x(t_f))^T$. The superscript $i$ in the dynamics $f_1$ refers to the $i^{th}$ element of the vector and is used to avoid confusion against default and inserted dynamics $f_1$ and $f_2$, respectively.
\subsection{Dependence of Second Order Needle Variations on Lie Bracket Structure}
The Lie bracket of two vectors $f(x)$, and $g(x)$ is
\begin{align*}
[f, g](x) = \frac{\partial g}{\partial x} f(x) - \frac{\partial f}{\partial x}g(x), 
\end{align*}
which generates a control vector that points in the direction of the net infinitesimal change in
state $x$ created by infinitesimal noncommutative flow $\phi_\epsilon^f\,\circ\,\phi_\epsilon^g\, \circ\,\phi_\epsilon^{-f}\,\circ\,\phi_\epsilon^{-g}\,\circ\,x_0$, where $\phi_\epsilon^f$ is the flow along a vector field $f$ for time $\epsilon$\cite{murray1994book, jakubczyk2001introduction}. Lie brackets are most commonly used for their connection to controllability\cite{rashevsky1938connecting,Chow1940}, but here they will show up in the expression describing the second-order needle variation. 

We relate second-order needle variation actions to Lie brackets in order to provide controls that are conditional on the nonlinear controllability of a system. Let $h_i : \mathbb{R} \mapsto \mathbb{R}^{N \times 1}$ denote the column control vectors that compose $h : \mathbb{R} \mapsto \mathbb{R}^{N \times M}$ in \eqref{dynamics} and $u_i \in \mathbb{R}$ be the individual control inputs. Then, we can express dynamics as
\begin{align*}
f = g + \sum_i^M h_iu_i.
\end{align*}
and, for default control $v=0$, we can re-write the MIH as
\medmuskip = 0.3mu
\begin{align*}
\frac{d^2J}{d\lambda_+^2}=&\big(\sum_{i=1}^M h_iu_i\big)^T\,\Omega\sum_{j=1}^Mh_ju_j + \rho^T\Big(\sum_{i=1}^M(D_xh_iu_i)\cdot\,g \\
&-D_xg\cdot(h_iu_i)+\sum_{i=1}^MD_xh_iu_i\sum_{i=1}^Mh_iu_i\Big)-D_xl_1\sum_{i=1}^Mh_iu_i.
\end{align*}
Splitting the sum expression into diagonal ($i=j$) and off-diagonal ($i\ne j$) elements, and by adding and subtracting $2\sum_{i}^M\sum_{j=1}^{i-1}(D_xh_iu_i)(h_ju_j)$, we can write
\begin{align*}
\sum_{i=1}^MD_xh_iu_i\sum_{i=1}^Mh_iu_i =& \sum_{i}^M\sum_{j=1}^{i-1}[h_i,h_j]u_iu_j\\&
+ 2\sum_{i}^M\sum_{j=1}^{i-1}(D_xh_iu_i)(h_ju_j) \\
&+ \sum_{i=j=1}^M(D_xh_iu_i)(h_iu_i).
\end{align*}
Then, we can express the MIH as
\begin{align*}
\frac{d^2J}{d\lambda_+^2} =& \sum_{i=1}^M \sum_{j=1}^M u_i u_j h_i^T \Omega h_j + \rho^T\Big(\sum_{i=2}^M \sum_{j=1}^{i-1} [h_i,h_j] u_i u_j \notag \\
&+ 2 \sum_{i=2}^M\sum_{j=1}^{i-1} (D_x h_i)h_j u_iu_j + \sum_{i=1}^{M}(D_x h_i)h_iu_iu_i \notag \\
&+\sum_{i=1}^M [g, h_i] u_i\Big) - D_x l(\sum_{i=1}^M h_iu_i).
\end{align*}
The expression contains Lie bracket terms of the control vectors that appear in the system dynamics, indicating that second-order needle variations consider higher-order nonlinearities. By associating the MIH to Lie brackets, we next prove that second-order needle variation actions can guarantee decrease of the objective for certain types of controllable systems. 

\subsection{Existence of Control Solutions with First- and Second-Order Mode Insertion Gradients}
	In this section, we prove that the first two orders of the mode insertion gradient can be used to guarantee controls that reduce objectives of the form \eqref{cost} for systems that are controllable with first-order Lie brackets. The analysis is applicable to optimization problems that satisfy the following assumptions.
\begin{assumption}\label{as:1}
	The vector elements of dynamics $f_1$ and $f_2$ are real, bounded, $\mathcal{C}^2$ in $x$, and $\mathcal{C}^0$ in $u$ and $t$.
\end{assumption}
\begin{assumption}\label{as:2}
	The incremental cost $l_1(x)$ is real, bounded, and $\mathcal{C}^2$ in $x$. The terminal cost $m(x(t_f))$ is real and twice differentiable with respect to $x(t_f)$.
\end{assumption}
\begin{assumption}\label{as:3}
	Default and inserted controls $v$ and $u$ are real, bounded, and $\mathcal{C}^0$ in $t$.
\end{assumption}
Under Assumptions \ref{as:1}-\ref{as:3}, the MIG and MIH expressions are well-defined. Then, as we show next, there are control actions that can improve any objective that is not a local optimizer. 
\begin{definition}
	A local optimizer of the cost function $\eqref{cost}$ is given by a set $(x^*, u^*)$ if and only if the set describes a trajectory that corresponds to an objective function $J(x^*(t))$ for which $D_xJ(x^*(t))= 0$.
\end{definition}
\begin{prop}\label{nonzerorho}
	Consider a set $(x, v)$ that describes the state and default control of \eqref{Dynamics}. If $(x, v) \ne (x^*, v^*)$, then the first-order adjoint $\rho$ is a non-zero vector.
\end{prop}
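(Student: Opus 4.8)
The plan is to prove the contrapositive: if the first-order adjoint $\rho$ vanishes identically on $[t_o,t_f]$, then $(x,v)$ is a local optimizer in the sense of the preceding Definition, i.e.\ $(x,v)=(x^*,v^*)$. The bridge between the adjoint and that definition is the classical identity that $\rho(t)$ is precisely the gradient of the cost functional \eqref{cost} with respect to a state perturbation at time $t$, propagated forward under the default dynamics $f_1$ with $v$ held fixed; equivalently, $\rho(t)^T = D_x J$ evaluated along the default trajectory at time $t$. So once that identity is in hand, $\rho\equiv 0$ is literally the statement $D_x J(x(t))=0$ for all $t$, which is the local-optimizer condition.

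First I would set up the standard variational argument. Fix $t_1\in[t_o,t_f]$, perturb the state at $t_1$ by $\delta x(t_1)$, and let $z(t)$ be its forward propagation along the default trajectory, i.e.\ the solution of the linearization $\dot z = (D_x f_1)\,z$ with $z(t_1)=\delta x(t_1)$; Assumptions \ref{as:1}--\ref{as:3} guarantee $f_1, l_1, m$ and the resulting trajectory are regular enough that this linearization, and all derivatives below, exist and are bounded. The induced first-order change in the cost is $\delta J = \int_{t_1}^{t_f}(D_x l_1)\,z\,\mathrm{d}t + D_x m(x(t_f))\,z(t_f)$. Next I would differentiate $\rho^T z$ and substitute the adjoint ODE $\dot\rho = -D_x l_1^{\,T} - D_x f_1^{\,T}\rho$, which gives $\tfrac{d}{dt}(\rho^T z) = -(D_x l_1)\,z$; integrating from $t_1$ to $t_f$ and using the terminal condition $\rho(t_f)=D_x m(x(t_f))^T$ makes the boundary term at $t_f$ cancel the terminal-cost contribution, leaving $\delta J = \rho(t_1)^T\delta x(t_1)$. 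Hence $D_x J$ at the state $x(t_1)$ equals $\rho(t_1)^T$, for every $t_1$.

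Finally I would close the argument: if $\rho\equiv 0$ on $[t_o,t_f]$ then $D_x J(x(t))=0$ for all $t$, so $(x,v)$ satisfies the local-optimizer condition and $(x,v)=(x^*,v^*)$; contrapositively, $(x,v)\neq(x^*,v^*)$ forces $\rho\not\equiv 0$, i.e.\ $\rho$ is a non-zero (vector-valued) function. As a side note, when $D_x l_1\equiv 0$ along the trajectory the adjoint ODE becomes homogeneous and non-vanishing at one instant propagates to non-vanishing for all $t$; in general $\rho$ may still have isolated zeros, so the statement is read as $\rho$ not being identically zero, which is exactly what the subsequent construction needs in order to pick an insertion time $\tau$ with $\rho(\tau)\neq 0$.

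I do not expect a genuine obstacle here. The only real care is bookkeeping: being explicit that the ``$D_x J(x^*(t))$'' appearing in the Definition is the state-perturbation (cost-to-go) gradient that the adjoint equation is built to reproduce, and invoking Assumptions \ref{as:1}--\ref{as:3} to justify linearizing the flow and differentiating under the integral sign. The remaining manipulation is the routine integration-by-parts with the costate described above.
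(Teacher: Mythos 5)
Your proof is correct, but it takes a different route from the paper's. You establish the classical adjoint--sensitivity identity $\delta J = \rho(t_1)^T\,\delta x(t_1)$ by propagating a state perturbation through the linearized flow and integrating $\tfrac{d}{dt}(\rho^T z) = -(D_x l_1)\,z$ by parts, and then read the Definition's ``$D_x J$'' as this cost-to-go gradient, so that $\rho\equiv 0$ is literally the optimality condition. The paper instead reads $D_x J$ as $\int_{t_0}^{t_f} D_x l_1\,\mathrm{d}t + D_x m(x(t_f))$ and argues by a short chain of implications directly from the adjoint ODE: a non-optimizer forces $D_x l_1 \not\equiv 0$ or $D_x m(x(t_f))\neq 0$, hence (via $\dot\rho = -D_x l_1^T - D_x f_1^T\rho$ and $\rho(t_f)=D_x m^T$) either $\dot\rho\not\equiv 0$ or $\rho(t_f)\neq 0$, so $\rho$ cannot vanish identically. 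The core fact both proofs exploit is the same --- $\rho\equiv 0$ would force $D_x l_1\equiv 0$ and $D_x m = 0$ --- but your version buys a precise, standard meaning for the otherwise ambiguous $D_x J$ of a functional and proves strictly more (the full sensitivity identity), while the paper's is shorter and needs only the structure of the costate equation. Your closing caveat is also well taken and matches the paper's actual conclusion: both arguments establish only that $\rho$ is not identically zero, i.e.\ $\exists\, t\in[t_0,t_f]$ with $\rho(t)\neq 0$, not that $\rho(t)\neq 0$ for every $t$, which is what the subsequent propositions implicitly rely on when choosing an insertion time.
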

\begin{proof}
	Using \eqref{cost}, 
	\begin{align*}
	x \ne x^* &\Rightarrow D_xJ(x(t)) \ne 0 \\
	&\Rightarrow \int_{t_0}^{t_f} D_xl_1(x(t)) \mathrm{d}t + D_xm(x(t_f)) \ne 0 \\
	&\Rightarrow \int_{t_0}^{t_f} D_xl_1(x(t)) \mathrm{d}t \ne 0~\text{OR}~D_xm(x(t_f)) \ne 0 \\
	&\Rightarrow D_xl_1(x(t)) \ne 0~\text{OR}~D_xm(x(t_f)) \ne 0\\
	&\Rightarrow \dot{\rho} \ne 0\lor \rho(t_f) \ne 0.
	\end{align*}
	Therefore, if $x \ne x^*$, then $\exists~ t\in[t_0,t_f]$ such that $\rho \ne 0$.
\end{proof}
\begin{prop}\label{AdjVec}
	Consider dynamics given by \eqref{Dynamics} and a pair of state and control $(x, v) \ne (x^*, v^*)$ such that $\frac{dJ}{d\lambda_+} = 0 ~ \forall ~ u~\in \mathbb{R}^M$ and $\forall ~ t \in [t_o, t_f]$. Then, the first-order adjoint $\rho$ is orthogonal to all control vectors $h_i$. 
\end{prop}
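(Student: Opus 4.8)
The plan is to exploit the control-affine structure of the dynamics directly in the mode insertion gradient \eqref{MIG}. Writing $f_1(x,v)=g+hv$ and $f_2(x,u)=g+hu$, the drift term $g$ cancels in the difference, so that $f_2-f_1 = h(u-v)$ and hence
\begin{align*}
\frac{dJ}{d\lambda_+} = \rho^T h\,(u-v) = \sum_{i=1}^M (\rho^T h_i)(u_i-v_i).
\end{align*}
This exhibits the mode insertion gradient as an affine function of the inserted control value $u$ whose gradient with respect to $u$ is the row vector $\rho^T h$ evaluated along the default trajectory at the switching time.

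Next I would invoke the hypothesis that $\frac{dJ}{d\lambda_+}=0$ for every $u\in\mathbb{R}^M$ and every $t=\tau\in[t_o,t_f]$. Fixing $\tau$ and letting $u$ run over $v+e_k$ for each standard basis vector $e_k$ of $\mathbb{R}^M$ (equivalently, differentiating the displayed expression in $u$), the vanishing of $\frac{dJ}{d\lambda_+}$ forces each coefficient to be zero, i.e.\ $\rho(\tau)^T h_i(\tau)=0$ for all $i=1,\dots,M$. Since $\tau$ was arbitrary in $[t_o,t_f]$, we conclude $\rho^T h_i\equiv 0$ on $[t_o,t_f]$ for every control vector $h_i$, which is exactly the claimed orthogonality. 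The only point requiring care is the handling of the quantifiers: ``for all $u$ and all $t$'' must be read as permitting the inserted control value and the switching time to be chosen independently, and one should note that Assumptions~\ref{as:1}--\ref{as:3} make \eqref{MIG} well defined pointwise in $\tau$, so the pointwise conclusion is legitimate.

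Finally, I would remark that this statement has genuine content precisely because of Proposition~\ref{nonzerorho}: since $(x,v)\neq(x^*,v^*)$, the adjoint $\rho$ is not identically the zero vector, so the orthogonality $\rho\perp\operatorname{span}\{h_i\}$ is a nontrivial geometric constraint rather than a vacuous one. There is no real obstacle in the argument itself—the work is entirely in recognizing that the control-affine form collapses $f_2-f_1$ to $h(u-v)$ and that an affine scalar function of $u$ vanishing for all $u$ has vanishing gradient. The interest of the proposition is structural: it sets up the subsequent argument that a nonzero $\rho$ annihilating all the $h_i$ cannot also annihilate the bracket directions $[h_i,h_j]$ and $[g,h_i]$ appearing in the MIH \eqref{MIH} when those brackets span $\mathbb{R}^N$, thereby forcing the second-order needle variation to be nonzero and hence usable for descent.
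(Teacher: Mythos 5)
Your argument is correct and follows essentially the same route as the paper: both reduce $f_2-f_1$ to $h(u-v)$ via the control-affine structure, observe that $\frac{dJ}{d\lambda_+}=\sum_i(\rho^T h_i)(u_i-v_i)$ is linear in the free increments $u_i-v_i$, and conclude that vanishing for all $u$ forces each coefficient $\rho^T h_i$ to be zero, with Proposition~\ref{nonzerorho} cited only to note that the resulting orthogonality is a nontrivial constraint. Your treatment of the quantifiers over $t$ and $u$ is, if anything, slightly more careful than the paper's.
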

\begin{proof}
	The linear combination of the elements of a vector $x$ is always zero if and only if $x$ is the zero vector. Given that, rewrite \eqref{MIG} as
	\begin{align*}
	\frac{d J}{d \lambda_+}=0 &\Rightarrow \rho^T \sum_i^M h_i (u_i - v_i) = 0 \\
	&\Rightarrow \sum_i^M k_i w_i = 0 ~ \forall ~ w_i,
	\end{align*}
	where $w_i = (u_i - v_i)$ and $k_i = \rho^Th_i \in \mathbb{R}$. The linear combination of the elements of $k$ is zero for any $w_i$, which means $k$ must be the zero vector. By Proposition \ref{nonzerorho}, $\rho \ne 0$ for a non-optimizer pair of state and control and, as a result, $\rho^T h_i~=~0~\forall~i\in[1,M]$.
\end{proof}
\begin{prop}\label{AdjLie}
	Consider dynamics given by \eqref{Dynamics} and a pair of state and control $(x, v) \ne (x^*, v^*)$ such that $\frac{dJ}{d\lambda_+} = 0 ~ \forall ~ u \in \mathbb{R}^M$ and $\forall ~ t \in [t_o, t_f]$. Further assume that the control vectors $h_i$ and their Lie Bracket terms $[h_i, h_j]$ span the state space $\mathbb{R}^N$. Then, there exist $i$ and $j$ such that $\rho^T [h_i, h_j] \ne 0$. 
\end{prop}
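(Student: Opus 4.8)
The plan is to argue by contradiction, stacking the two preceding propositions on top of a one-line linear-algebra observation. First I would fix a time $t^{*}\in[t_o,t_f]$ at which $\rho(t^{*})\neq 0$; such a time exists by Proposition~\ref{nonzerorho}, because $(x,v)\neq(x^{*},v^{*})$. Next, since $\frac{dJ}{d\lambda_{+}}=0$ for every $u\in\mathbb{R}^{M}$ and every $t\in[t_o,t_f]$, Proposition~\ref{AdjVec} applies at each instant and yields $\rho(t)^{T}h_i(t)=0$ for all $i$ and all $t$; in particular $\rho(t^{*})^{T}h_i(t^{*})=0$ for every $i$.

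Then I would suppose, for contradiction, that $\rho(t^{*})^{T}[h_i(t^{*}),h_j(t^{*})]=0$ for every pair $i,j$. Combined with the previous step, $\rho(t^{*})$ is orthogonal to every vector in the set $\{h_i(t^{*})\}\cup\{[h_i(t^{*}),h_j(t^{*})]\}$. By hypothesis this set spans $\mathbb{R}^{N}$, so it contains a basis of $\mathbb{R}^{N}$; a vector orthogonal to a basis is the zero vector, whence $\rho(t^{*})=0$. This contradicts the choice of $t^{*}$, so there must exist indices $i,j$ with $\rho(t^{*})^{T}[h_i(t^{*}),h_j(t^{*})]\neq 0$, which is the claim (and, using $[h_i,h_j]=-[h_j,h_i]$, the ordering of the indices is immaterial).

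Since the argument is essentially linear algebra layered onto Propositions~\ref{nonzerorho} and~\ref{AdjVec}, I do not expect a genuine obstacle. The only point that needs care is bookkeeping of the time argument: the orthogonality to the $h_i$ furnished by Proposition~\ref{AdjVec} holds for all $t$, and it must be invoked at exactly the instant $t^{*}$ where Proposition~\ref{nonzerorho} guarantees $\rho\neq 0$, with the spanning hypothesis understood to hold at the corresponding state $x(t^{*})$ along the default trajectory. It is also worth remarking that the same reasoning shows the vector of bracket inner products cannot vanish identically at a non-optimizer, which is precisely what makes the Lie-bracket terms in the MIH expression active and therefore exploitable for constructing a descent action.
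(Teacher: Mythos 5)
Your proof is correct and is essentially the paper's argument: the paper writes $\rho$ as a linear combination of the spanning set $\{h_i\}\cup\{[h_i,h_j]\}$, uses Proposition~\ref{AdjVec} to eliminate the $h_i$ components, and left-multiplies by $\rho^T$ to obtain $\rho^T\rho = \sum \rho^T[h_i,h_j] \ne 0$, which is just the direct form of your contradiction that a nonzero $\rho$ cannot be orthogonal to a spanning set of $\mathbb{R}^N$. If anything, your contrapositive phrasing is slightly cleaner (it sidesteps the paper's informally written decomposition, which omits coefficients on the bracket terms), and your care with the time instant $t^{*}$ makes explicit a bookkeeping point the paper leaves implicit.
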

\begin{proof}
	A set of vectors $S = (v_1, \dots, v_M)$ is linearly independent if and only if every vector $r\in span(S)$ can be uniquely written as a linear combination of $(v_1, \dots, v_M)$. The control vectors and their Lie Brackets span the $\mathbb{R}^N$ space. On that assumption, it follows that any N-dimensional vector can be expressed in terms of the control vectors and their Lie Brackets. The first-order adjoint is an $N$-dimensional vector, which is non-zero for a non-optimizer pair of $x,v$ by Preposition \ref{nonzerorho}. Therefore, it can be expressed as
	\begin{equation}\label{RHO}
	\rho = c_1 h_1 + \dots + c_M h_M + \sum_{i\ne j}^M [h_i, h_j] \ne 0.
	\end{equation}
	Given that $\frac{dJ}{d\lambda_+}=0$, and by Proposition \ref{AdjVec}, $\rho$ is orthogonal to all control vectors $h_i$ (which also implies that the control vectors $h_i$ do not span $\mathbb{R}^N$). Then, left-multiplying \eqref{RHO} by $\rho^T$ yields
	\begin{align*}
	\rho^T \rho = \sum_{i\ne j}^M \rho^T[h_i, h_j] \ne 0.
	\end{align*}It follows that there is at least one Lie bracket term $[h_i, h_j]$ that is not orthogonal to the costate $\rho$.
\end{proof}
\begin{prop}\label{NegGrad}
		Consider dynamics given by \eqref{Dynamics} and a trajectory described by state and control $(x, v)$. If $(x, v) \ne (x^*, v^*)$, then there are always control solutions $u \in \mathbb{R}^M$ such that $\frac{dJ}{d\lambda_+} \le 0$ for some $t \in [t_o, t_f].$ 
\end{prop}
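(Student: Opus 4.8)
The plan is to work directly from the mode insertion gradient formula \eqref{MIG}. Since $f_1 = g + hv$ and $f_2 = g + hu$ share the same drift vector field $g$, the difference telescopes to $f_2 - f_1 = h(u-v)$, so at any candidate insertion time $\tau \in [t_o,t_f]$ we have $\frac{dJ}{d\lambda_+} = \rho^T h\,(u-v)$, a scalar that is linear in the free quantity $u$. Everything then reduces to exhibiting one admissible $u$ for which this scalar is nonpositive.

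First I would fix an arbitrary $\tau$ and make the steepest-descent-type choice $u = v - h^T\rho$, with $h$ and $\rho$ evaluated along the default trajectory at $\tau$; Assumptions \ref{as:1}--\ref{as:3} ensure $\rho$ and $h$ are finite so that $u \in \mathbb{R}^M$, and that the MIG is well defined. Substituting gives $\frac{dJ}{d\lambda_+} = -\rho^T h h^T \rho = -\norm{h^T\rho}^2 \le 0$, which establishes the claim (in fact at every $\tau$, not merely some). To bring out the dichotomy that motivates the next result, I would then split into cases: if $\rho^T h_i = 0$ for all $i$ and all $t$ — the first-order-singular case — then any choice (e.g. $u=v$) gives $\frac{dJ}{d\lambda_+} = 0$; otherwise Proposition \ref{nonzerorho} together with the failure of orthogonality yields some $t$ and index $i$ with $\rho^T h_i \ne 0$, and at that $t$ the choice $w = u-v = -h^T\rho$ makes $\frac{dJ}{d\lambda_+} = -\sum_i (\rho^T h_i)^2 < 0$ strictly. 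Either way a control solution with $\frac{dJ}{d\lambda_+}\le 0$ exists for some $t\in[t_o,t_f]$.

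There is no genuine obstacle in this argument; the only thing to handle with care is the quantifier structure — the statement asks for a $u$ making the MIG nonpositive at \emph{some} insertion time, and the choice $u = v - h^T\rho$ in fact delivers it at every insertion time. The substantive point, rather than a difficulty, is that the guaranteed inequality is only \emph{non-strict}: precisely when $\rho$ is orthogonal to every $h_i$ (equivalently, the hypothesis of Proposition \ref{AdjVec}) the first-order term vanishes identically, and this is the singular situation that forces one to pass to the mode insertion Hessian and its Lie-bracket structure in the results that follow.
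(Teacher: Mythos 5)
Your proposal is correct and rests on the same observation as the paper's proof, namely that the mode insertion gradient reduces to $\rho^T h(u-v)$, which is linear in the free quantity $u-v$ and can therefore always be made nonpositive. The only cosmetic difference is that you exhibit the explicit steepest-descent choice $u-v=-h^T\rho$ (yielding $-\lVert h^T\rho\rVert^2\le 0$), whereas the paper argues by flipping the sign of $u-v$ whenever the gradient is positive and then notes the same orthogonality dichotomy you identify.
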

\begin{proof}
	Using dynamics of the form in \eqref{dynamics}, the expression of the mode insertion gradient can be written as
	\begin{align*}
	\frac{dJ}{d\lambda_+} = \rho^T(f_2 - f_1) = \rho^T\big(h(u-v)\big).
	\end{align*}
	By Proposition \ref{nonzerorho}, $\rho \ne 0$ for a non-optimizer trajectory. Given controls $u$ and $v$ that generate a positive mode insertion gradient, there always exist control $u'$ such that the mode insertion gradient is negative, i.e. $u'-v = - (u-v)$. 
	The mode insertion gradient is zero for all $u\in\mathbb{R}^M$ if and only if the costate vector is orthogonal to each control vector $h_i$\footnote{If the control vectors span the state space $\mathbb{R}^N$, the costate vector $\rho \in \mathbb{R}^N$ cannot be orthogonal to each of them. Therefore, for first-order controllable (fully actuated) systems, there always exist controls for which the cost can be reduced to first order.}.
\end{proof}
First-order needle variation methods are singular when the mode insertion gradient is zero. When that is true, the second-order mode insertion gradient is guaranteed to be negative for systems that are controllable with first-order Lie Brackets, which in turn implies that a control solution can be found with second-order needle variation methods.
\begin{prop}\label{nonkin}
Consider dynamics given by \eqref{Dynamics} and a trajectory described by state and control $(x, v) \ne (x^*, v^*)$ such that $\frac{dJ}{d\lambda_+} = 0$ for all $u \in \mathbb{R}^M$ and $t \in [t_o, t_f]$. If the control vectors $h_i$ and the Lie brackets $[h_i, h_j]$ and $[g, h_i]$ span the state space ($\mathbb{R}^N$), then there always exist control solutions $u\in \mathbb{R}^M$ such that $\frac{d^2J}{d\lambda_+^2} < 0$. 
\end{prop}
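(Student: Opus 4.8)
The plan is to show that, under the stated hypotheses, there is a switching time $\tau$ and an inserted control $u$ at which the mode insertion Hessian is strictly negative, which is exactly what is needed for a second‑order needle action to provide descent. First I would fix a time $t$ at which $\rho(t)\neq 0$; such a $t$ exists by Proposition \ref{nonzerorho} because $(x,v)\neq(x^*,v^*)$. At this $t$, the hypothesis $\frac{dJ}{d\lambda_+}=0$ for all $u$ and Proposition \ref{AdjVec} give $\rho^T h_i=0$ for every control vector $h_i$, so first‑order needle variations are singular and we must exploit the MIH. Now, a nonzero vector cannot be orthogonal to a spanning set, so from the assumption that $\{h_i\}\cup\{[h_i,h_j]\}\cup\{[g,h_i]\}$ spans $\mathbb{R}^N$ together with $\rho^T h_i=0$, it follows that $\rho$ fails to be orthogonal to at least one of the brackets $[h_i,h_j]$ or $[g,h_i]$.

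Next I would substitute $\rho^T h_i=0$ into the Lie‑bracket form of the MIH and separate it into the part that is linear in $u$, $L(u)=\rho^T\sum_i[g,h_i]u_i-D_xl_1\sum_i h_i u_i$, and the part that is quadratic in $u$, $Q(u)=\sum_{i,j}u_i u_j\,h_i^T\Omega h_j+\rho^T\big(\sum_{i>j}[h_i,h_j]u_i u_j+2\sum_{i>j}(D_xh_i)h_j u_i u_j+\sum_i(D_xh_i)h_i u_i^2\big)$, so that $\frac{d^2J}{d\lambda_+^2}=L(u)+Q(u)$. The argument then splits on which kind of bracket witnesses the non‑orthogonality above. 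If $L$ is not identically zero, choose a direction $w$ with $L(w)<0$ and set $u=\epsilon w$: then $\frac{d^2J}{d\lambda_+^2}=\epsilon L(w)+\epsilon^2 Q(w)<0$ for all sufficiently small $\epsilon>0$, because the quadratic term is of higher order in $\epsilon$. If instead $L\equiv 0$, the MIH equals $Q(u)$ and the task reduces to showing $Q$ is not positive semidefinite; for this I would take a pair $(i,j)$ with $\rho^T[h_i,h_j]\neq 0$, restrict to controls supported on $\{u_i,u_j\}$, and argue the resulting $2\times2$ quadratic form is indefinite because the Lie‑bracket term enters its off‑diagonal entry, using the freedom in the relative sign and magnitude of $u_i$ and $u_j$ to pick a point where it is negative.

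The hard part is precisely this last step: turning ``$\rho$ is not orthogonal to some first‑order Lie bracket'' into ``$Q$ takes a negative value.'' The Lie‑bracket contribution $\rho^T[h_i,h_j]u_i u_j$ appears at the same quadratic order in $u$ as the curvature terms $\rho^T(D_xh_i)h_j$ and the cost‑Hessian terms $h_i^T\Omega h_j$, so it cannot simply be made to dominate by rescaling $u$; one genuinely has to show that the presence of the bracket destroys semidefiniteness of the relevant principal block, and this is the point at which first‑order nonlinear controllability is actually consumed. A related subtlety I would want to flag is that, since $\rho^T h_i\equiv 0$ holds on the whole interval, differentiating in time yields $\rho^T[g,h_i]=D_xl_1 h_i$ (up to an explicit‑time‑dependence term), so in the autonomous case $L$ vanishes automatically and the entire proof rests on the quadratic‑form step, with the $[g,h_i]$ in the spanning hypothesis serving mainly to force $\rho\neq 0$ rather than to supply a descent direction directly. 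Once $Q(u)<0$ is secured at some $(\tau,u)$, the existence of a second‑order needle action with $\frac{d^2J}{d\lambda_+^2}<0$ is immediate.
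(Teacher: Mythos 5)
Your setup (first--order singularity giving $\rho^T h_i=0$ by Propositions \ref{nonzerorho} and \ref{AdjVec}, hence $\rho$ must pair nontrivially with some bracket by the spanning hypothesis) matches the paper, and your handling of the case $L\not\equiv 0$ by taking $u=\epsilon w$ with $\epsilon$ small is sound. The genuine gap is exactly the step you flag yourself and then do not carry out: you never show that $\rho^T[h_i,h_j]\neq 0$ forces the quadratic part $Q$ to take a negative value, and the specific claim you offer --- that the $2\times 2$ principal block of $Q$ on $\{u_i,u_j\}$ is indefinite ``because the Lie--bracket term enters its off--diagonal entry'' --- is false as stated. That block is
\begin{align*}
&\begin{pmatrix} h_i^T\Omega h_i+\rho^T(D_xh_i)h_i & c\\ c & h_j^T\Omega h_j+\rho^T(D_xh_j)h_j\end{pmatrix},\\
&\qquad c=h_i^T\Omega h_j+\rho^T(D_xh_i)h_j+\tfrac{1}{2}\rho^T[h_i,h_j],
\end{align*}
whose diagonal entries need not vanish and whose off--diagonal entry contains terms besides the bracket; a nonzero off--diagonal entry does not imply indefiniteness (e.g.\ $\left(\begin{smallmatrix}2&1\\1&2\end{smallmatrix}\right)\succ 0$), and $c$ could even vanish while $\rho^T[h_i,h_j]\neq 0$. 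So nothing in your argument rules out $Q\succeq 0$ on that block.

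The paper closes this gap with a different choice of perturbation, which is the device your restriction--to--a--block argument is missing: it fixes one channel $k$ and inserts a control with $u_j=v_j$ for all $j\neq k$ (and $v_k=0$), so that $f_2-f_1=h_ku_k$. Then every $\Omega$-- and $(D_xh_i)h_j$--cross term with $i,j\neq k$ cancels, the only surviving $u_iu_k$ coupling comes from $\rho^T u_k[f_1,h_k]$ and equals $\rho^T[h_i,h_k]\,u_iu_k$ \emph{exactly}, and the MIH in \eqref{MIH} collapses to $u^T\mathcal{G}u$ plus a term linear in $u_k$, where $\mathcal{G}$ has a zero $(M-1)\times(M-1)$ principal block and borders $\mathcal{G}_{ik}=\mathcal{G}_{ki}=\tfrac{1}{2}\rho^T[h_i,h_k]$. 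If some $\rho^T[h_i,h_k]\neq 0$, the $2\times2$ minor on rows $\{i,k\}$ has determinant $-\tfrac{1}{4}\big(\rho^T[h_i,h_k]\big)^2<0$, so $\mathcal{G}$ is indefinite, and since $u^T\mathcal{G}u$ is even in $u$ while the linear term is odd, the sign of $u$ can be chosen to make the total strictly negative. Your closing observation that $\tfrac{d}{dt}(\rho^Th_i)\equiv 0$ forces $\rho^T[g,h_i]=D_xl_1h_i$ in the autonomous case is a sharp one --- it shows the linear term cannot generically be relied on as a descent direction and puts real pressure on the fallback case in which all $\rho^T[h_i,h_k]=0$ --- but it does not repair the quadratic--form step, which is where the proof actually lives.
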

\begin{proof}
	Let $k \in [1, M]$ be an index chosen such that $[h_i, h_k]$ for some $i\in[1,M]\setminus \{k\}$\footnote{The notation $\setminus$ indicates that the element $k$ is subtracted from the set $[1,M]$.} is a vector that is linearly independent of all control vectors $h_i~\forall~i\in[1,M]$. The proof then considers controls such that $u_j~=~v_i~\forall~ j,i\ne k$ and $v_k = 0$ and expresses the MIH expression \eqref{MIH} as 
	\begin{align*}
	\frac{d^2J}{d\lambda_+^2} = u^T \mathcal{G} u - u_k((D_xl_1) h_k - \rho^T[g, h_k]) ,
	\end{align*} 
	where $\mathcal{G}_{ij} = 0~\forall~i, j \in[1,M]\setminus \{k\}$, $\mathcal{G}_{ik} = \mathcal{G}_{ki} = \frac{1}{2}[h_i, h_k]$, and $\mathcal{G}_{kk} = h_k^T\Omega h_k + \rho^TD_xh_k\cdot h_k$. 
	The matrix $\mathcal{G}$ is shown to be either indefinite or negative semidefinite if there exists a Lie bracket term $[h_i, h_k]$ such that $\rho^T [h_i, h_k] \ne 0$. If, on the other hand, $\rho^T [h_i, h_k] = 0$, by reasoning that is similar to Proposition \ref{AdjLie}, there is at least a Lie bracket $[g, h_k] \ne 0$ and the MIH expression reduces to a quadratic in $u_k$. In either case, it then becomes straightforward to show that there exist controls for which the MIH expression is negative. 
\end{proof}
\begin{theorem}\label{Theorem}
	Consider dynamics given by \eqref{Dynamics} and a trajectory described by state and control $(x, v) \ne (x^*, v^*)$. If the control vectors $h_i$ and the Lie brackets $[h_i, h_j]$ and $[g, h_i]$ span the state space $(\mathbb{R}^N)$, then there always exists a control vector $u \in \mathbb{R}^M$ and a duration $\lambda$ such that the cost function \eqref{cost} can be reduced. 
\end{theorem}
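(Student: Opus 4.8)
The plan is to combine Propositions~\ref{NegGrad} and~\ref{nonkin} with a Taylor expansion of the cost in the insertion duration $\lambda$. Fix a switching time $\tau \in (t_o, t_f)$ and an inserted control $u$, and regard $J$ as a function of $\lambda \ge 0$ along the family of trajectories \eqref{Dynamics}. Under Assumptions~\ref{as:1}--\ref{as:3}, the solution map $\lambda \mapsto x(\cdot\,;\lambda)$, and hence $J(\lambda)$, is twice differentiable from the right at $\lambda = 0$, with $J(0)$ the cost of the default trajectory, $J'(0^+) = \frac{dJ}{d\lambda_+}$ given by \eqref{MIG}, and $J''(0^+) = \frac{d^2J}{d\lambda_+^2}$ given by \eqref{MIH}. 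Thus $J(\lambda) = J(0) + \lambda\,\frac{dJ}{d\lambda_+} + \frac{\lambda^2}{2}\frac{d^2J}{d\lambda_+^2} + o(\lambda^2)$ as $\lambda \to 0^+$.

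Next I would split into two cases. Case 1: there exist $u \in \mathbb{R}^M$ and $\tau \in (t_o,t_f)$ with $\frac{dJ}{d\lambda_+} \ne 0$. By Proposition~\ref{NegGrad}, after replacing $u$ by $u'$ with $u'-v = -(u-v)$ if necessary, we may assume $\frac{dJ}{d\lambda_+} < 0$; then the linear term dominates and $J(\lambda) < J(0)$ for all sufficiently small $\lambda > 0$. Case 2: $\frac{dJ}{d\lambda_+} = 0$ for all $u \in \mathbb{R}^M$ and all $t \in [t_o,t_f]$. By the spanning hypothesis on $h_i$, $[h_i,h_j]$ and $[g,h_i]$, the hypotheses of Proposition~\ref{nonkin} are satisfied, so there is a choice of $u$ (and corresponding $\tau$) with $\frac{d^2J}{d\lambda_+^2} < 0$. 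Since the first-order term vanishes identically, the expansion reduces to $J(\lambda) = J(0) + \frac{\lambda^2}{2}\frac{d^2J}{d\lambda_+^2} + o(\lambda^2)$, and again $J(\lambda) < J(0)$ for all sufficiently small $\lambda > 0$. In both cases the pair $(u,\lambda)$ reduces \eqref{cost}, which is the claim.

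The main obstacle is the differentiability claim for $J(\lambda)$ at $\lambda = 0^+$ together with control of the remainder $o(\lambda^2)$: one must verify that $\lambda \mapsto x(\cdot\,;\lambda)$ inherits enough regularity from Assumptions~\ref{as:1}--\ref{as:3} for the second-order expansion to hold on a one-sided neighborhood of $\lambda = 0$, so that the sign of the dominant (first- or second-order) term is genuinely preserved for small $\lambda$. This is precisely the needle-variation machinery that produced \eqref{MIG} and \eqref{MIH} (cf.~\cite{caldwell2011switching,SAC}); once that expansion is in hand, the case analysis above is routine. A secondary point is to note that in Case~2 the strictness $\frac{d^2J}{d\lambda_+^2} < 0$ from Proposition~\ref{nonkin} (rather than mere nonpositivity) is what rules out the remainder cancelling the quadratic term, so no further argument is needed there.
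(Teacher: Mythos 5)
Your proposal follows essentially the same route as the paper's own proof: a second-order Taylor expansion of $J$ in the insertion duration $\lambda$, combined with the case split from Propositions~\ref{NegGrad} and~\ref{nonkin} (either $\frac{dJ}{d\lambda_+}<0$ for some $u$, or it vanishes identically and the spanning hypothesis forces $\frac{d^2J}{d\lambda_+^2}<0$). Your added care about the one-sided differentiability of $\lambda \mapsto J(\lambda)$ and the $o(\lambda^2)$ remainder is a welcome tightening of what the paper states only as an approximation, but it is the same argument.
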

\begin{proof}
	The local change of the cost function \eqref{cost} due to inserted control $u$ of duration $\lambda$ can be approximated with a Taylor series expansion
	\begin{align*}
	J(\lambda) - J(0) \approx \lambda \frac{dJ}{d\lambda_+} + \frac{\lambda^2}{2} \frac{d^2J}{d\lambda_+^2}.
	\end{align*}
	By Propositions \ref{NegGrad} and \ref{nonkin}, either 1) $\frac{dJ}{d\lambda_+} <0$ or 2) $\frac{dJ}{d\lambda_+} = 0$ and $\frac{d^2J}{d\lambda_+^2}<0$. Therefore, there always exist controls that reduce the cost function \eqref{cost} to first or second order.
\end{proof}

\section{Control Synthesis}
In this section, we present an analytical solution of first- and second-order needle variation controls that reduce the cost function \eqref{cost} to second order. We then describe the algorithmic steps of the feedback scheme used in the simulation results of this paper. 

\subsection{Analytical Solution for Second Order Actions}
For underactuated systems, there are states at which $\rho$ is orthogonal to the control vectors $h_i$ (see Proposition \ref{NegGrad}). At these states, control calculations based only on first-order sensitivities fail, while controls based on second-order information still decrease the objective provided that the control vectors and their Lie brackets span the state space (see Theorem \ref{Theorem}). We use this property to compute an analytical synthesis method that expands the set of states for which individual actions that guarantee descent of an objective function can be computed. 

Consider the Taylor series expansion of the cost around control duration $\lambda$. Given the expressions of the first- and second-order mode insertion gradients, we can write the cost function \eqref{cost} as a Taylor series expansion around the infinitesimal duration $\lambda$ of inserted control $u$:
\begin{align}
J(\lambda) & \approx J(0) + \lambda \frac{dJ}{d\lambda_+} + \frac{\lambda^2}{2} \frac{d^2J}{d\lambda_+^2} \notag.
\end{align}
The first- and second-order mode insertion gradients used in the expression are functions of the inserted control $u(t)$ in \eqref{Dynamics}. For a fixed $\lambda$, we can minimize the function using Newton's Method to update the control actions. Control solutions that minimize the Taylor expansion of the cost will have the form
\begin{align}\label{Taylor}
u^{\>*}(t)=& \underset{u}{\operatorname{argmin}} ~J(0) + \lambda \frac{dJ}{d\lambda_+} + \frac{\lambda^2}{2}\frac{d^2J}{d\lambda_+^2} +\frac{1}{2} \lVert u \rVert^2_R,
\end{align}
where the MIH has both linear and quadratic terms in $u(t)$. The time dependence of the control $u$ is purposefully used here to emphasize that control solutions are functions of time $t$. Using the G\^ateaux derivative, we computed the minimizer of \eqref{Taylor} to be
\begin{align}\label{optcon}
u^{\>*}(t)=&[\frac{\lambda^2}{2}\,\Gamma + R] ^{-1} \, [\frac{\lambda^2}{2}\,\Delta + \lambda (-h^T\rho)],
\end{align}
where $\Delta: \mathbb{R} \mapsto \mathbb{R}^{M\times1}$ and $\Gamma: \mathbb{R}\mapsto \mathbb{R}^{M\times M}$ are respectively the first- and second-order derivatives of $d^2J/d\lambda_+^2$ with respect to the control $u$ and are given by
\begin{align*}
\Delta\triangleq& \Big[\big[h^T \big(\Omega^T + \Omega\big)h +
2 h^T \cdot(\sum_{k=1}^{n} (D_xh_k)\rho_{k})^T\big]v
\notag\\
& + {(D_xg \cdot{h})}^{T} \rho
-
(\sum_{k=1}^{n} (D_xh_k)\rho_{k}) \cdot g 
+
h^T D_xl^T\Big]\\
\Gamma \triangleq& [h^T \big(\Omega^T + \Omega\big)h + 
h^T \cdot (\sum_{k=1}^{n} (D_xh_k)\rho_{k})^T+ 
\sum_{k=1}^{n} (D_xh_k)\rho_{k}\cdot h]^T.
\end{align*}
The parameter $R$ denotes a metric on control effort.

The existence of control solutions in \eqref{optcon} depend on the inversion of the Hessian $H = \frac{\lambda^2}{2}\,\Gamma + R$. To ensure H is positive definite, we implement a spectral decomposition on the Hessian $H~=~VDV^{-1}$, where matrices $V$ and $D$ contain the eigenvectors and eigenvalues of $H$, respectively. We replace all elements of the diagonal matrix $D$ that are smaller than $\epsilon$ with $\epsilon$
to obtain $\bar{D}$ and replace $H$ with $\bar{H} = V\bar{D}V^{-1}$ in \eqref{optcon}. We prefer the spectral decomposition approach to the Levenberg-Marquardt method ($\bar{H} = H + \kappa I \succ 0$), because the latter affects all eigenvalues of the Hessian and further distorts the second-order information. At saddle points, we set the control equal to the eigenvector of $H$ that corresponds to the most negative eigenvalue in order to descend along the direction of most negative curvature\cite{murray2010newton,schnabel1990new, boyd2004convex, nocedal2006sequential}. 

This synthesis technique provides controls at time $t$ that guarantee to reduce the cost function \eqref{cost} for systems that are controllable using first-order Lie brackets. Control solutions are computed solely by forward simulating the state over a time horizon $T$ and backward simulating the first- and second-order costates $\rho$ and $\Omega$. As we see next, this leads to a very natural, and easily implementable, algorithm for applying cost-based feedback. 

\subsection{Algorithmic Description of Control Synthesis Method}
The proposed second-order analytical controls presented in \eqref{optcon} are implemented in a series of steps shown in Algorithm \ref{algorithm}.
\begin{algorithm}
	\begin{enumerate}[{1.}]
		\item Simulate states and costates with default dynamics $f_1$ over a time horizon $T$
		\item Compute optimal needle variation controls
		\item Saturate controls
		\item Find the insertion time that corresponds to the most negative mode insertion gradient 
		\item Use a line search to find control duration that ensures reduction of the cost function \eqref{cost}
	\end{enumerate} 
	\caption{}
	\label{algorithm}
\end{algorithm}
We compare first- and second-order needle variation actions by implementing different controls in Step 2 of Algorithm \ref{algorithm}. For the first-order case, we implement controls 
that are the solution to a minimization problem of the first-order sensitivity of the cost function \eqref{cost} and the control effort
\begin{align}\label{optimalu}
u^*(t) &= \min\limits_{u} ~~ \frac{1}{2} (\frac{dJ_1}{d\lambda^+_i}-\alpha_d)^2+\frac{1}{2} \lVert u \rVert^2_R \notag\\ 
&= (\Lambda + R^T)^{-1}(\Lambda v + h^T\rho \alpha_d),
\end{align}
where $\Lambda \triangleq h^T\rho\rho^Th$ and $\alpha_d \in \mathbb{R}^- $ expresses the desired value of the mode insertion gradient term (see, for example, \cite{mamakoukas2016}). Typically, $\alpha_d = \gamma J_o$, where $J_o$ is the cost function \eqref{cost} computed using default dynamics $f_1$. For second-order needle variation actions, we compute controls using \eqref{optcon}.

\subsection{Comparison to Alternative Optimization Approaches}
Algorithm \ref{algorithm} differs from controllers that compute control sequences over the entire time horizon in order to locally minimize the cost function. Rather, the proposed scheme utilizes the time-evolving sensitivity of the objective to infinitesimal switched dynamics and searches in a one-dimensional space for a finite duration of a single action that will optimally improve the cost. It does so using a closed-form expression and, as a result, it avoids the expensive iterative computational search in high-dimensional spaces, while it may still get closer to the optimizer with one iterate. 
\\\indent First-order needle variation solution are shown in \eqref{optimalu} to exist globally, demonstrate a larger region of attraction and have a less complicated representation on Lie Groups\cite{taosha}. These traits naturally transfer to second-order needle controls \eqref{optcon} that also contain the first-order information that is present in \eqref{optimalu}. In addition, as this paper demonstrates, the suggested second-order needle variation controller has formal guarantees of descent for systems that are controllable with first-order Lie brackets, which---to the best of our knowledge---is not provided by any alternative method. Given these benefits, the authors propose second-order needle variation actions as a complement to existing approaches for time-sensitive robotic applications that may be subject to large initial error, Euler angle singularities, or fast-evolving (and uncertain) objectives.

Next, we implement Algorithm \ref{algorithm} using first or second-order needle variation controls (shown in \eqref{optimalu} and \eqref{optcon}, respectively) to compare them in terms of convergence success on various underactuated systems. 
\section{Simulation Results}
The proposed synthesis method is implemented on three underactuated examples, the differential drive cart, a 3D kinematic rigid body and a dynamic model of an underwater vehicle. The kinematic systems of a 2D differential drive and a 3D rigid body are controllable using first-order Lie brackets of the vector fields and help verify Theorem \ref{Theorem}. The underactuated dynamic model of a 3D rigid body serves to compare controls in \eqref{optcon} and \eqref{optimalu} in a more sophisticated environment. In all simulation results, we start with default control $v = 0$ and an objective function of the form
\begin{align*}
J(x(t)) = \frac{1}{2}\int_{t_o}^{t_f} \lVert \vec{x}(t)-\vec{x}_d (t) \rVert^2_Q dt+\frac{1}{2}\lVert \vec{x}(t_f)-\vec{x}_d(t_f)\rVert^2_{P_1},
\end{align*}
where $\vec{x}_d$ is the desired state-trajectory, and $Q=Q^T \geq 0$, $P_1=P_1^T \geq 0$ are metrics on state error.
\subsection{2D Kinematic Differential Drive}
The differential drive system demonstrates that controls shown in \eqref{optimalu} that are based only on the first-order sensitivity of the cost function \eqref{cost} can be insufficient for controllable systems, contrary to controls shown in \eqref{optcon} that guarantee decrease of the objective for systems that are controllable using first-order Lie brackets (see Theorem \ref{Theorem}). The system states are its coordinates and orientation, given by $s = [x, y, \theta]^T$, with kinematic ($g=0$) dynamics
\begin{align*}
f = r\begin{bmatrix} cos(\theta) & cos(\theta) \\
					 sin(\theta) & sin(\theta) \\
					\frac{1}{L} & -\frac{1}{L}\end{bmatrix}
					\begin{bmatrix}u_R \\ u_L \end{bmatrix},
\end{align*}
where $r = 3.6$~cm, $L = 25.8$~cm denote the wheel radius and the distance between them, and $u_R$, $u_L$ are the right and left wheel control angular velocities, respectively (these parameter values match the specifications of the iRobot Roomba).

The control vectors $h_1$, $h_2$ and their Lie bracket term $[h_1, h_2] = 2\frac{r^2}{L}\big[-sin(\theta),-cos(\theta)\big]^T$ span the state space ($\mathbb{R}^3$). Therefore, from Theorem \ref{Theorem}, there always exist controls that reduce the cost to first or second order. 

Fig.~\ref{Differential Drive} demonstrates how different first- and second-order needle variation actions perform on reaching a nearby target. Actions based on first-order needle variations \eqref{optimalu} do not generate solutions that turn the vehicle, but rather drive it straight until the orthogonal displacement between the system and the target location is minimized. Actions based on second-order needle variations \eqref{optcon}, on the other hand, converge successfully. 

\begin{figure}[] 
	\begin{subfigure}[b]{0.45\linewidth}
		\centering
		\includegraphics[width=\linewidth,height = 0.15\textheight]{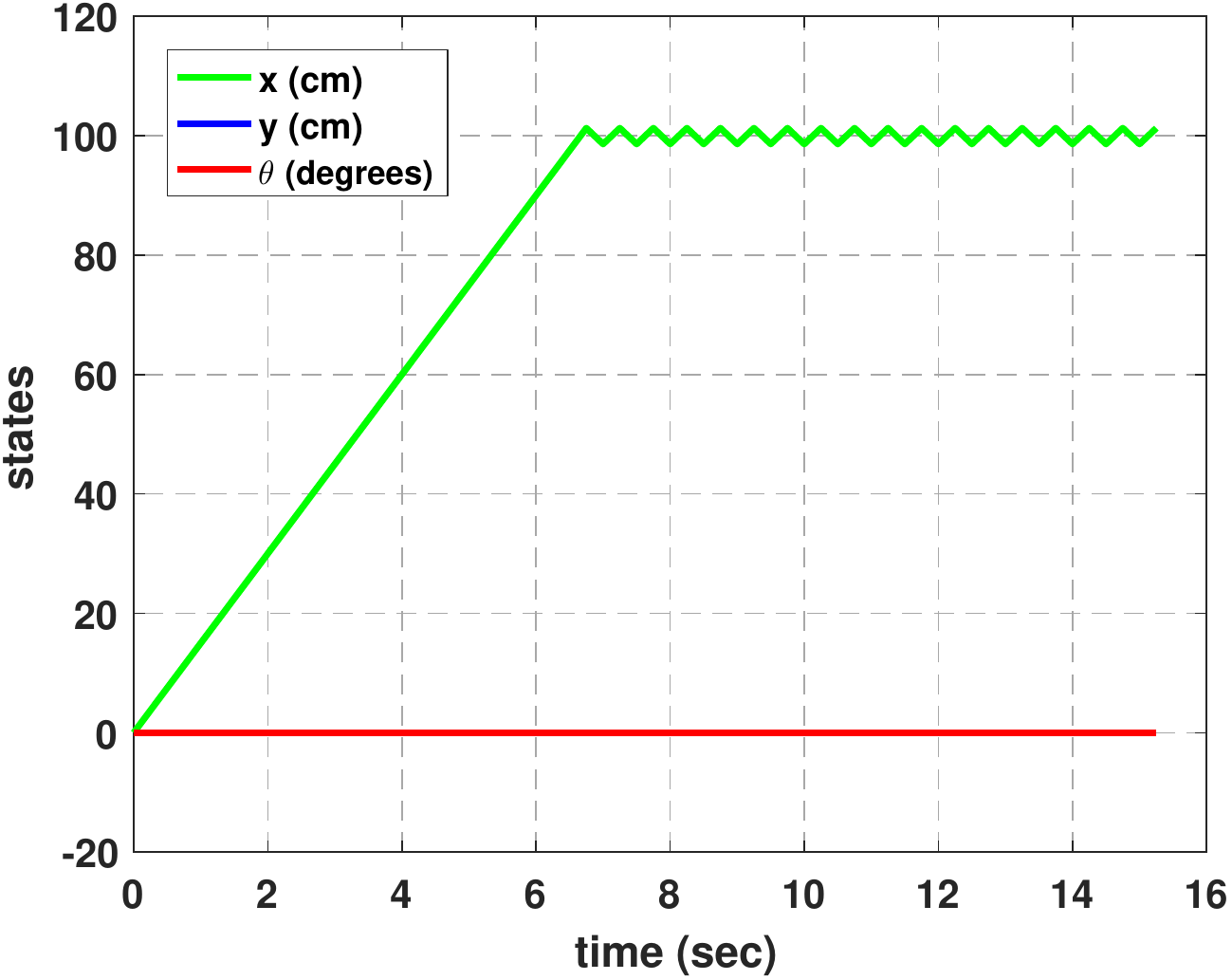} 
		\caption{} 
		\vspace{4ex}
	\end{subfigure}
	\hfill%
	\begin{subfigure}[b]{0.45\linewidth}
		\centering
		\includegraphics[width=\linewidth,height = 0.15\textheight]{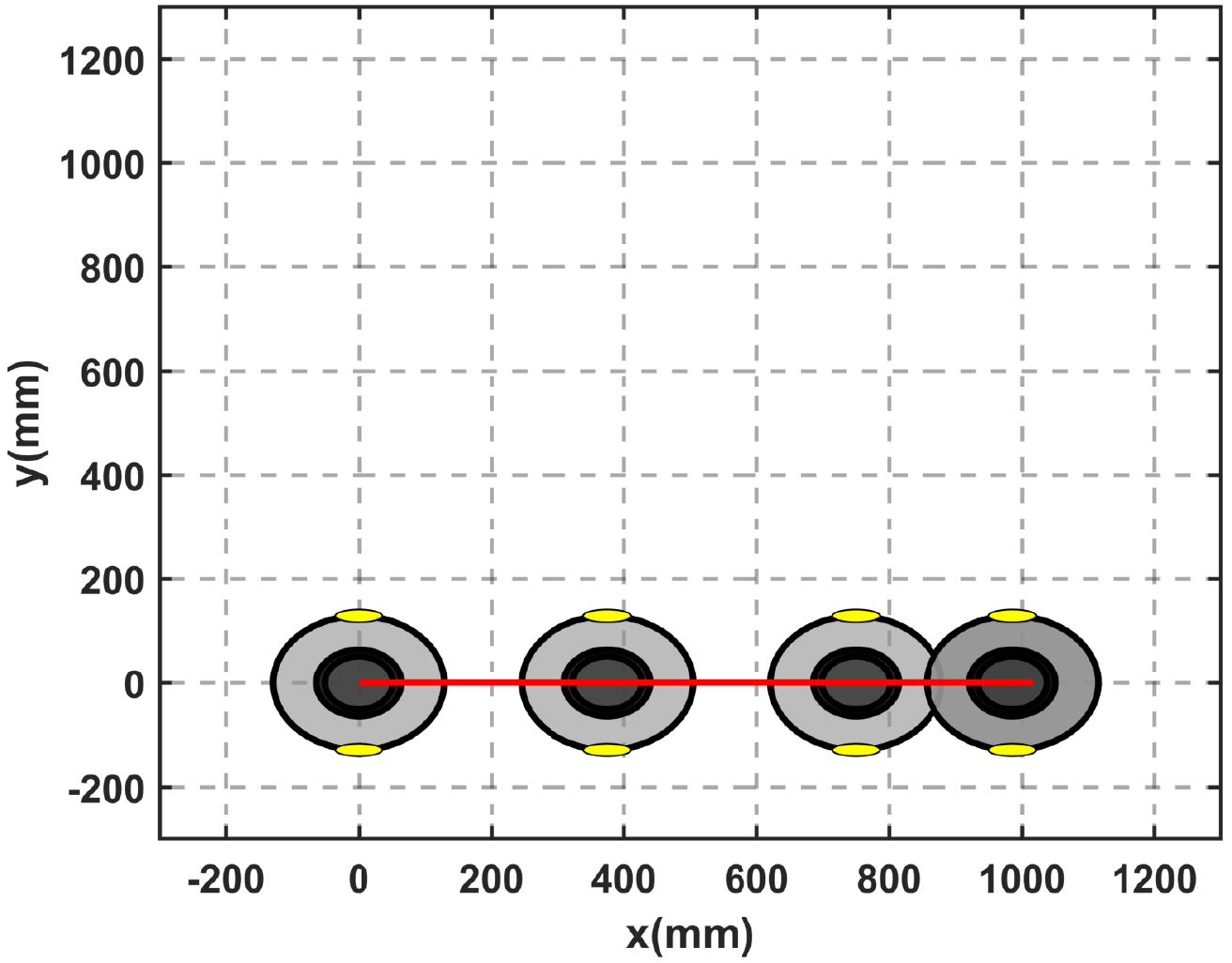} 
		\caption{} 
		\label{fig7:b} 
		\vspace{4ex}
	\end{subfigure} \hfill
	\begin{subfigure}[b]{0.45\linewidth}
		\centering
		\includegraphics[width=\linewidth, height = 0.15\textheight]{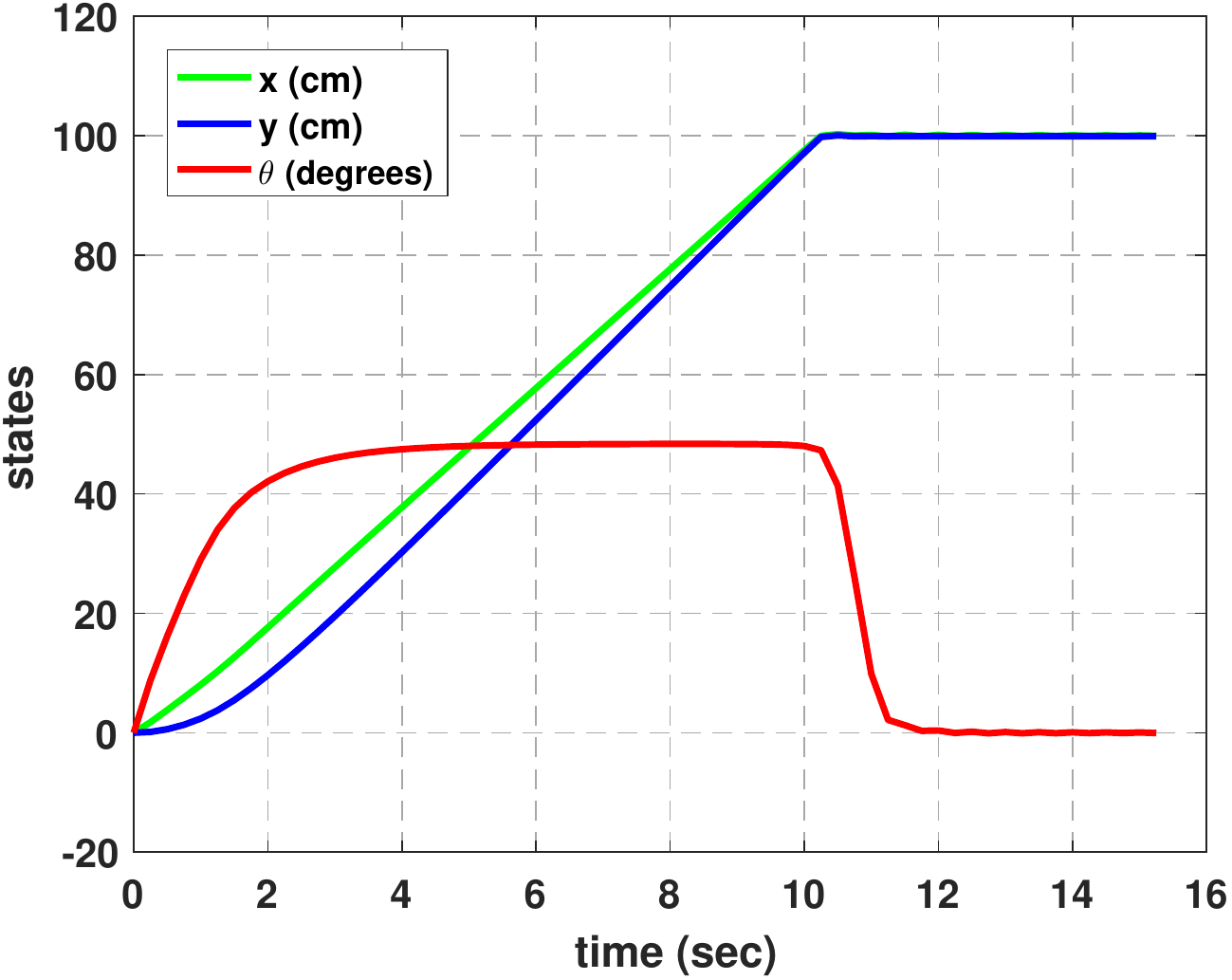} 
		\caption{} 
		\label{fig7:c} 
	\end{subfigure}
	\hfill%
	\begin{subfigure}[b]{0.45\linewidth}
		\centering
		\includegraphics[width=\linewidth,height = 0.15\textheight]{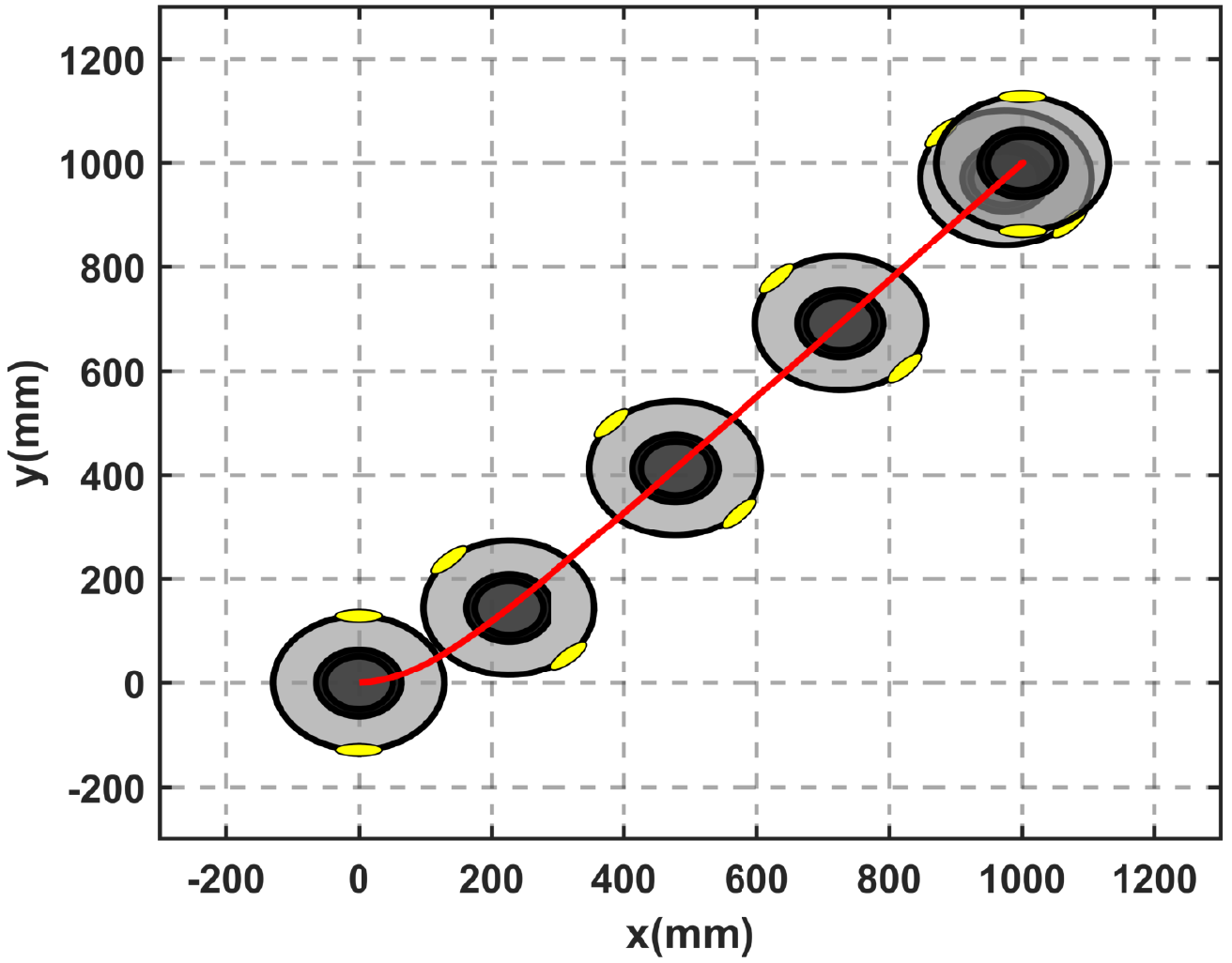} 
		\caption{} 
		\label{fig7:d} 
	\end{subfigure} 
	\caption{Differential drive using first- (top) and second-order (bottom) needle variation actions. Snapshots of the system are shown at $t = 0, 2.5, 5, 7.5, 10$, and $12.5$~sec. The target state is $[x_d, y_d, \theta_d] = [1000$~mm$,1000$~mm$,0]$.}
	\label{Differential Drive} 
\end{figure}

We also present a Monte Carlo simulation that compares convergence success using first- and second-order needle variations controls shown in \eqref{optimalu} and \eqref{optcon}, respectively. We sampled over initial coordinates $x_0, y_0 \in [-1500, 1500]$~mm using a uniform distribution and keeping only samples for which the initial distance from the origin exceeded $L/5$; $\theta_0 = 0$ for all samples. Successful samples were within $L/5$ from the origin with an angle $\theta <\pi/12$ within 60 seconds using feedback sampling rate of 4 Hz. Results were generated using $Q = \text{diag}(10,10,1000)$, $P_1 = \text{diag}(0,0,0)$, $T = 0.5$~s, $R = \text{diag}(100,100)$ for \eqref{optimalu}, $R = \text{diag}(0.1,0.1)$ for \eqref{optcon}, $\gamma = -15$, $\lambda = 0.1$ and saturation limits on the angular velocities of each wheel $\pm$150/36~mm/s\,\footnote{The metric on control effort is necessarily smaller for \eqref{optcon}, due to parameter $\lambda$. The parameter was chosen carefully to ensure that control solutions from \eqref{optcon} and \eqref{optimalu} were comparable in magnitude.}. As shown in Fig.~\ref{DifDriveMC_2nd}, the system always converges to the target using second-order needle variation actions, matching the theory. 

\begin{figure}[]
	\centering
	\includegraphics[width=0.5\linewidth, height = 0.15\textheight]{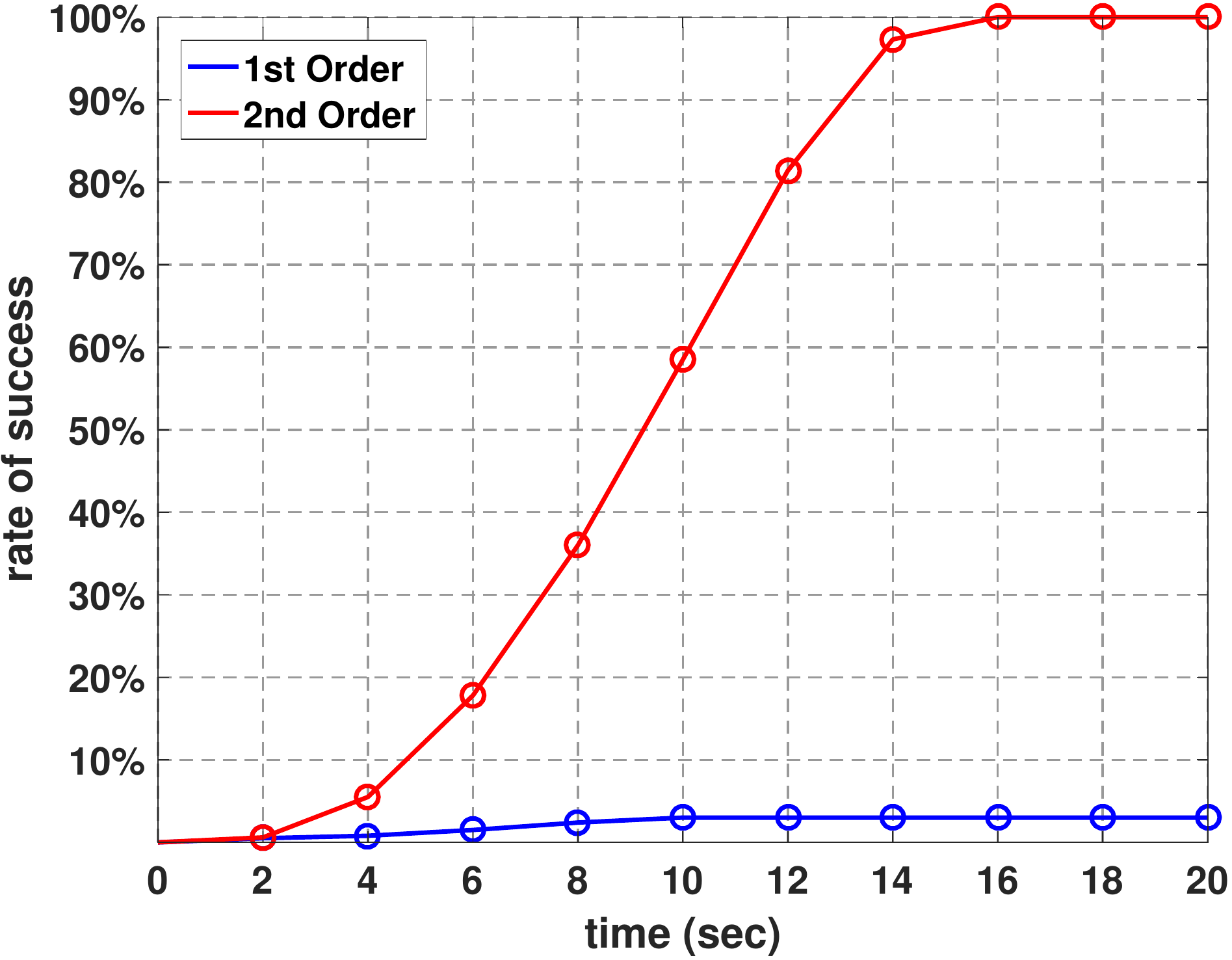}
	\caption{Convergence success rates of first- \eqref{optimalu} and second-order \eqref{optcon} needle variation controls for the kinematic differential drive model. Simulation runs: 1000.}\label{DifDriveMC_2nd}
\end{figure}

\subsection{3D Kinematic Rigid Body}
The underactuated kinematic rigid body is a three dimensional example of a system that is controllable with first-order Lie brackets. To avoid singularities in the state space, the orientation of the system is expressed in quaternions\cite{titterton2004strapdown, kuipers1999quaternions}. The states are $s = [x, y, z, q_0, q_1, q_2, q_3]$, where $b = [x, y, z]$ are the world-frame coordinates and $q = [q_0, q_1, q_2, q_3]$ are unit quaternions. Dynamics $f = [\dot{b},\dot{q}]^T$ are given by 

\begin{gather}
\dot{b} = R_qv, \label{dotb}\\
\dot{q} = \frac{1}{2}\begin{bmatrix} -q_1 & -q_2 & -q_3 \\ ~~q_0& -q_3& ~~q_2 \\ ~~q_3& ~~q_0& -q_1\\ -q_2& ~~q_1& ~~q_0 \end{bmatrix}\omega, \label{dotq}
\end{gather}
where $v$ and $\omega$ are the body frame linear and angular velocities, respectively\cite{da2015benchmark}. The rotation matrix for quaternions is
\medmuskip = 0.5mu
\begin{align*}
R_q=\begin{bmatrix} q_0^2 + q_1^2 - q_2^2 - q_3^2&  2(q_1q_2 - q_0q_3)& 2(q_1q_3+q_0q_2) \\
2(q_1q_2 + q_0q_3)&      q_0^2 - q_1^2+q_2^2 - q_3^2&  2(q_2q_3 - q_0q_1)\\
2(q_1q_3 - q_0q_2)&      2(q_2q_3 + q_0q_1)&      q_0^2-q1^2 -q_2^2+ q_3^2\end{bmatrix}.
\end{align*}

The system is kinematic: $v = F$ and $\omega = T$, where $F = (F_1, F_2, F_3)$ and $T = (T_1, T_2, T_3)$ describe respectively the surge, sway, and heave input forces, and the roll, pitch, and yaw input torques. We render the rigid body underactuated by removing the sway and yaw control authorities ($F_2 = T_3 = 0$). 

The four control vectors span a four dimensional space. First order Lie bracket terms add two more dimensions to span the state space ($\mathbb{R}^6$) (the fact that there are seven states in the model of the system is an artifact that is inherent in quaternion representation; it does not affect controllability, given that there is also a constraint that the norms of quaternions must sum up to one).
\begin{figure}[]
	\centering
	\includegraphics[width=0.5\linewidth, height = 0.15\textheight]{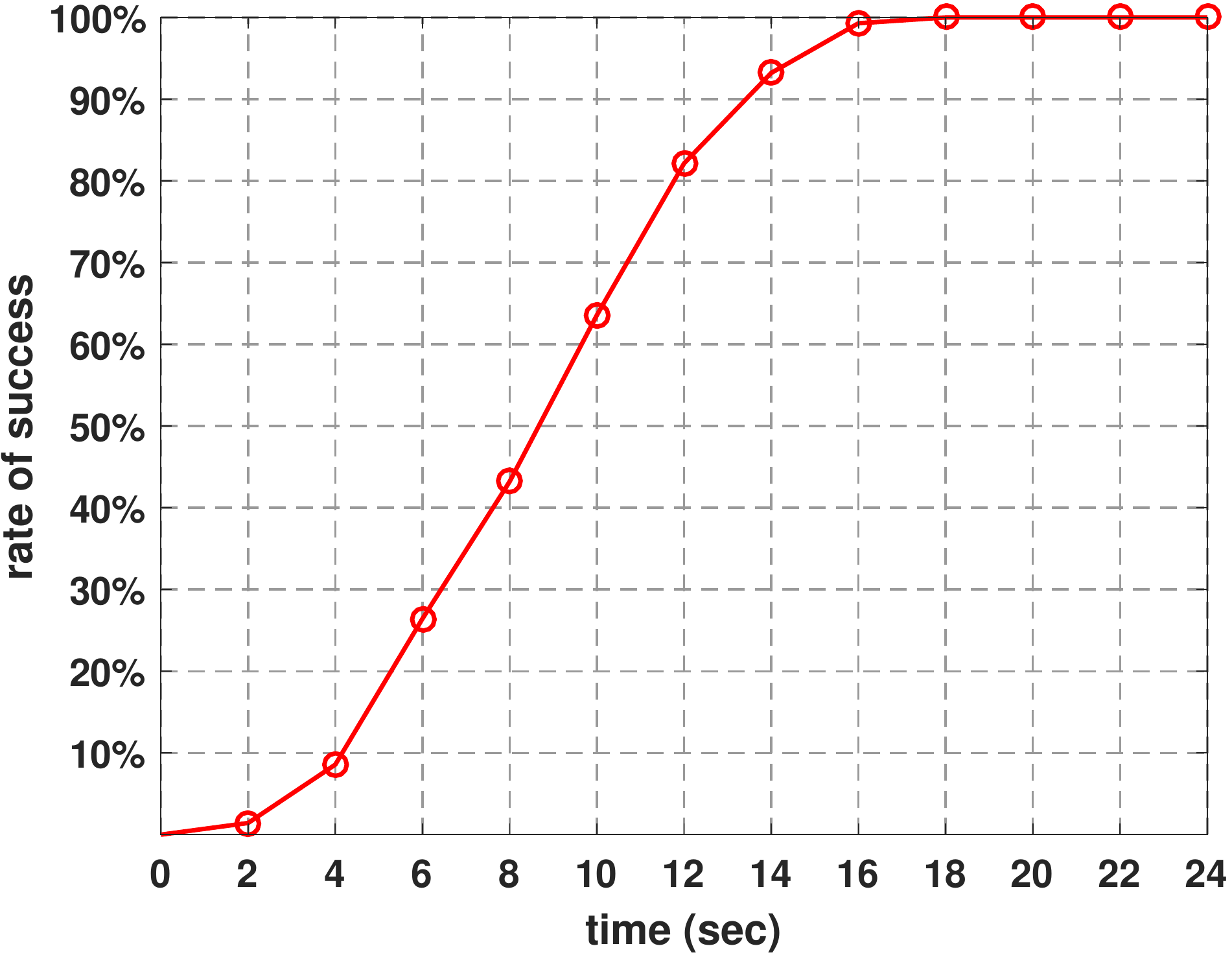}
	\caption{Convergence success rates of second-order needle variation controls \eqref{optcon} for the underactuated kinematic vehicle. First-order actions \eqref{optimalu} do not affect the $y$-coordinate of the rigid body and therefore never converge. Simulation runs: 280.}\label{KinMC_2nd}
\end{figure}
The vectors $h_1, h_2, [h_2, h_3]$ span $\mathbb{R}^3$ associated with the world frame coordinate dynamics $\dot{x}, \dot{y}$, and $\dot{z}$. Similarly, vectors $h_3, h_4$, and $[h_4, h_3]$ also span $\mathbb{R}^3$. Thereby, control vectors and their first-order Lie brackets span the state space and, from Theorem \ref{Theorem}, optimal actions shown in \eqref{optcon} will always reduce the cost function \eqref{cost}. 

To verify the theory, we present the convergence success of the system on 3D motion (see Fig.~\ref{KinMC_2nd}). Using Monte Carlo sampling with uniform distribution, initial locations were randomly generated such that $x_0, y_0, z_0 \in [-50, 50]$~cm keeping only samples for which the initial distance from the origin exceeded 6~cm. We regarded as a convergence success each trial in which the rigid body was within 6~cm to the origin by the end of 60 seconds at any orientation. Results were generated at a sampling rate of 20~Hz using $Q = 0$, $P_1 = \text{diag}(100,200,100,0,0,0,0)$, $T = 1.0$~s, $\gamma = -50000$, $\lambda = 10^{-3}$, $R~=~10^{-6}\,\text{diag}(1,1,100,100)$ for \eqref{optcon}, and $R = \text{diag}(10,10,1000,1000)$ for controls in \eqref{optimalu}. Controls were saturated at $\pm 10$~cm/s for the linear velocities and $\pm 10$~rad/s for the angular ones. As shown in Fig.~\ref{KinMC_2nd}, and as expected, all locomotion trials were successful. 

\subsection{Underactuated Dynamic 3D Fish}
\begin{figure}[]
	\centering
	\includegraphics[width=0.5\linewidth, height = 0.15\textheight]{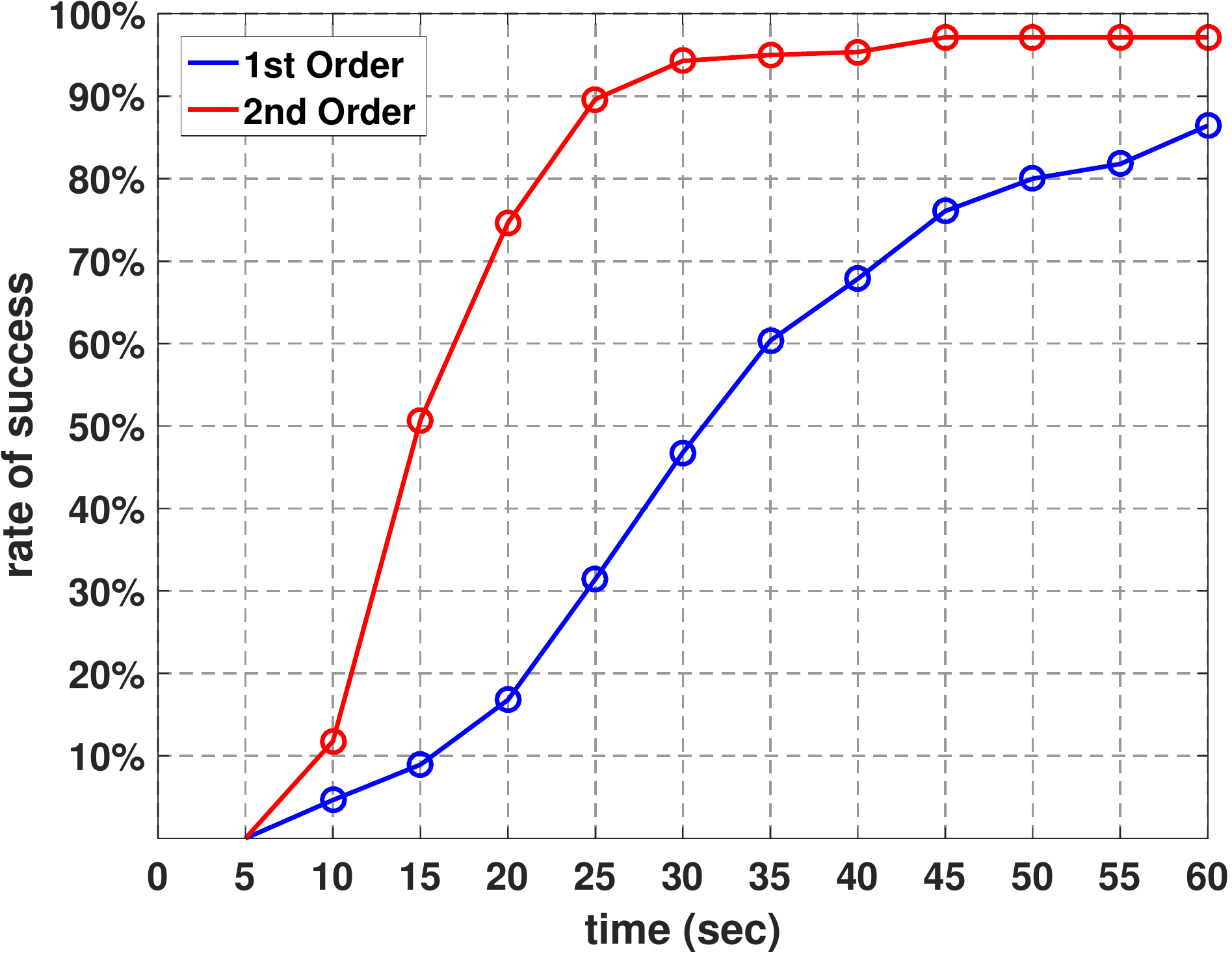}
	\caption{Convergence success rates of first- and second-order needle variation controls (\eqref{optimalu} and \eqref{optcon}, respectively) for the underactuated \textit{dynamic} vehicle model. Simulation runs: 280} 
	\label{DynMC}
\end{figure}
 \begin{figure*}%
	\centering
	\begin{subfigure}{\columnwidth}
		\includegraphics[width=0.8\columnwidth,height = 0.18\textheight]{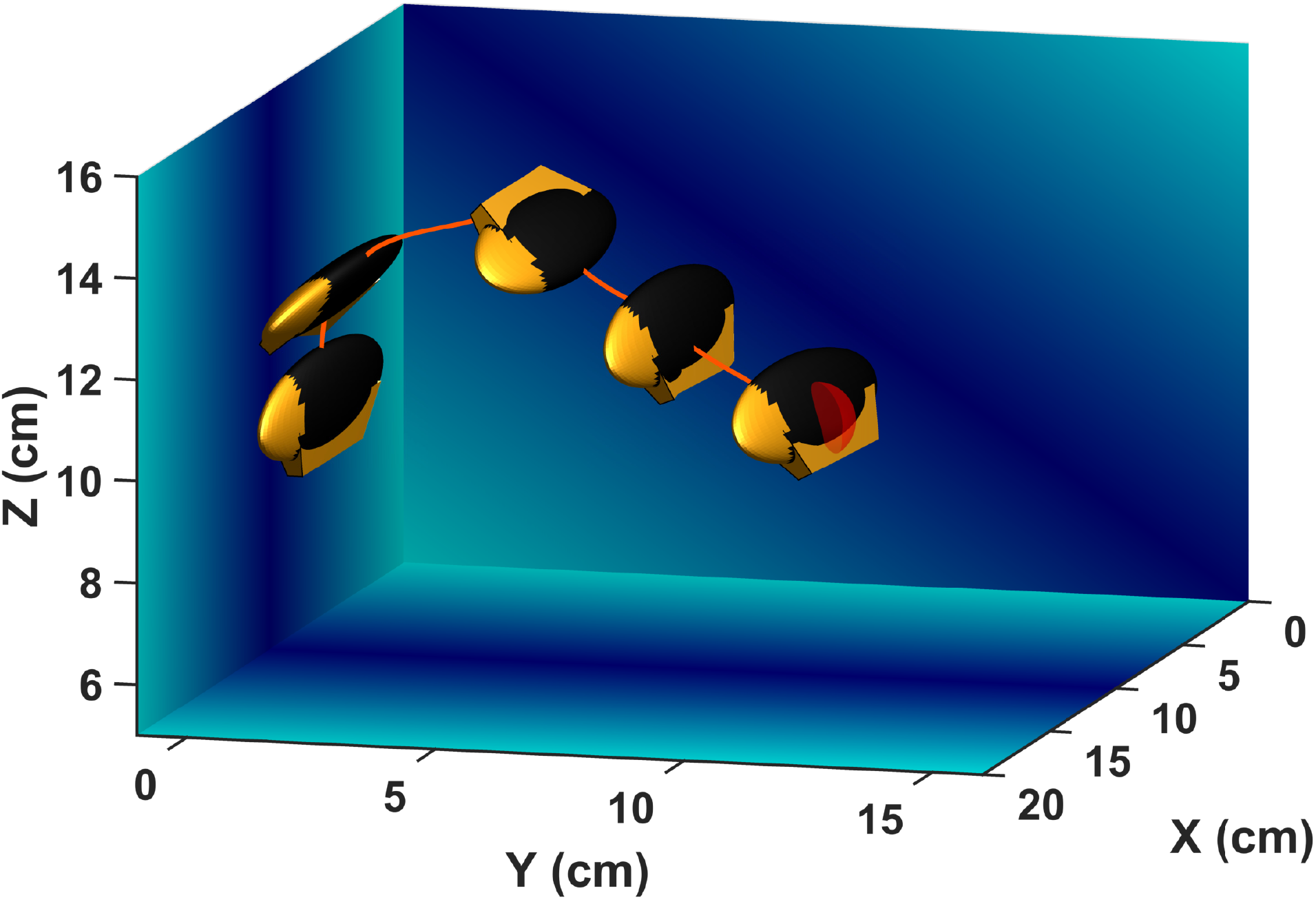}%
		\caption{}%
		\label{SidewayMovement}%
	\end{subfigure}\hfill%
	\begin{subfigure}{\columnwidth}
		\includegraphics[width=0.8\columnwidth,height = 0.18\textheight]{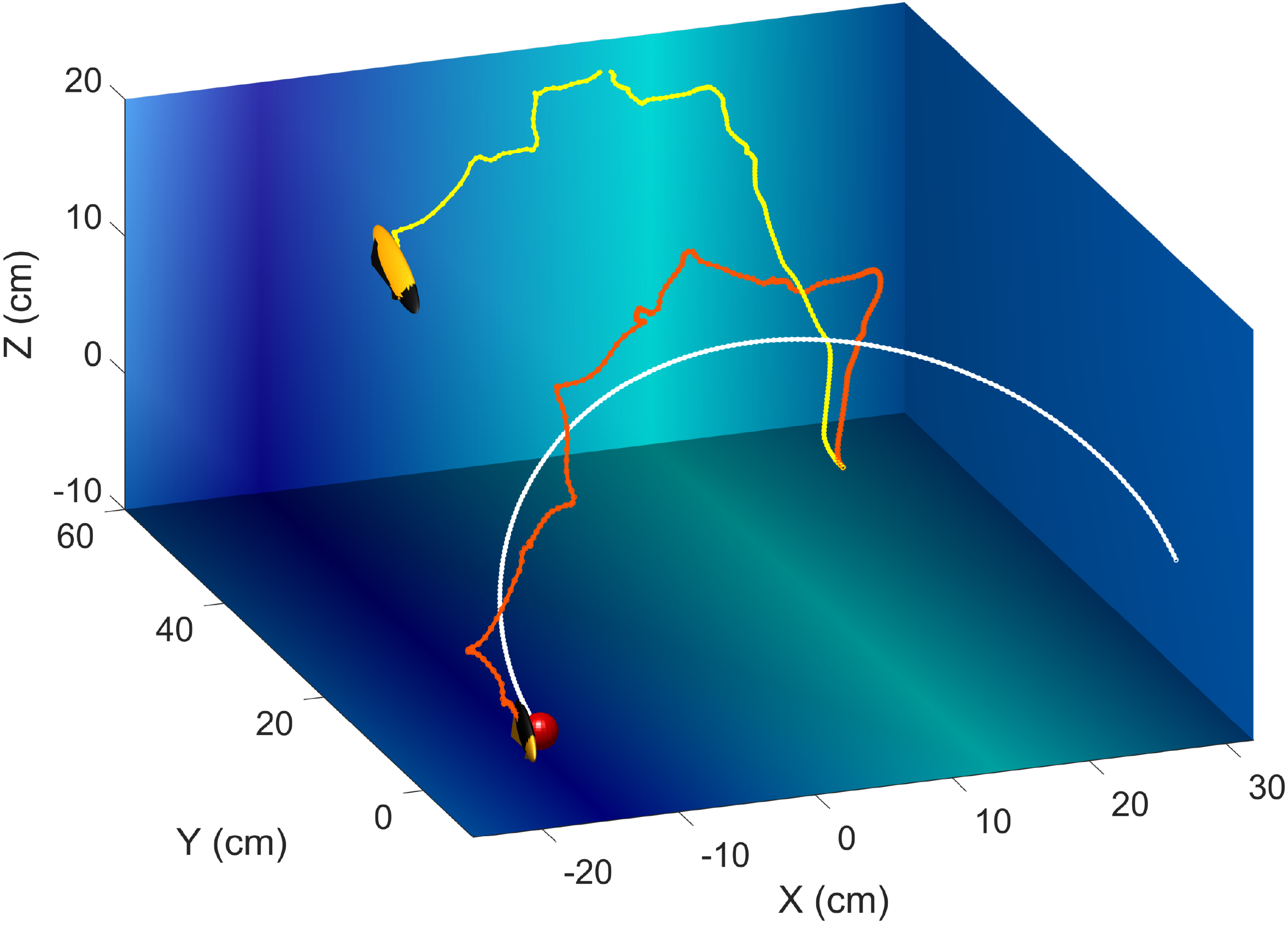}%
		\caption{}%
		\label{TrajTrack_drift}%
	\end{subfigure}\hfill%
	\caption{Figure \ref{SidewayMovement} shows snapshots of a parallel displacement maneuver using an underactuated dynamic vehicle model with second-order controls given by \eqref{optcon}; first-order solutions \eqref{optimalu} are singular throughout the simulation. Figure \ref{TrajTrack_drift} shows tracking performance of the same system in the presence of +10~cm/s $\hat{y}$ fluid drift. The yellow system corresponds to first-order needle variation actions; the red one to second order. The target trajectory (red ball) is indicated with white traces over a 10-second simulation. Animation of these results is available at https://vimeo.com/219628387.}
\end{figure*}
We represent the three dimensional rigid body with states $s~=~[b,~q,~v,~\omega]^T$, where $b = [x, y, z] $ are the world-frame coordinates, $q = [q_0,q_1, q_2, q_3]$ are the quaternions that describe the world-frame orientation, and $v = [v_x, v_y, v_z]$ and $\omega = [\omega_x, \omega_y, \omega_z]$ are the body-frame linear and angular velocities. 
The rigid body dynamics are given by $\dot{b}$ and $\dot{q}$ shown in \eqref{dotb} and \eqref{dotq} and
\begin{gather*}
M \dot{v} = Mv \times \omega + F, \\
J \dot{\omega} = J\omega \times \omega + T,
\end{gather*}
where the effective mass and moment of inertia of the rigid body are given by $M~=~\text{diag}(6.04, 17.31, 8.39)$~g and $J~=~\text{diag}(1.57, 27.78, 54.11)$g$\cdot$cm$^2$, respectively. This example is inspired by work in \cite{mamakoukas2016,postlethwaite2009optimal} and the parameters used for the effective mass and moment of inertia of a rigid body correspond to measurements of a fish. The control inputs are $F_2 = T_3 = 0$ and $F_3\ge0$.

The control vectors only span a four dimensional space and, since they are state-independent, their Lie brackets are zero vectors. However, the Lie brackets containing the drift vector field $g$ (that also appear in the MIH expression) add from one to four (depending on the states) independent vectors such that control solutions in \eqref{optcon} guarantee decrease of the cost function \eqref{cost} for a wider set of states than controls in \eqref{optimalu}. 

Simulation results based on Monte Carlo sampling are shown in Fig.~\ref{DynMC}. Initial coordinates $x_0, y_0, z_0$ were generated using a uniform distribution in $[-100, 100]$~cm, discarding samples for which the initial distance to the origin was less than 15~cm. Successful trials where the ones for which, within a simulation window of 60 seconds, the system approached within 5~cm to the origin (at any orientation) and whose magnitude of the linear velocities was, at the same time, less than 5~cm/s. Results were generated at a sampling rate of 20~Hz using \medmuskip=0mu
\thinmuskip=0mu
\thickmuskip=0mu$T = 1.5$~s, $P_1 = 0$, $Q~=~\frac{1}{200}\text{diag}(10^3,10^3,10^3,0,0,0,0, 1, 1, 1, 2\cdot10^3,10^3,10^3)$, $\gamma = -5$, $R = \text{diag}(10^3,10^3,10^6,10^6)$ for \eqref{optimalu}, $R = \frac{1}{2}\,\text{diag}(10^{-6},10^{-6}, 10^{-3},10^{-3})$ for \eqref{optcon}, and $\lambda = 10^{-4}$.\medmuskip=4mu
\thinmuskip=3mu
\thickmuskip=5mu ~The same control saturations ($F_1\in[-1, 1]$\,mN, $F_3\in[0,1]$\,mN, $T_1\in[-0.1, 0.1]$\,$\mu$N$\cdot$m, and $T_2\in[-0.1, 0.1]$\,$\mu$N$\cdot$m) were used for all simulations of the dynamic 3D fish. As shown in Fig. \ref{DynMC}, controls computed using second-order needle variations converge faster than those based on first-order needle variations, and 97\% of trials converge within 60 seconds. 

Both methods converge over time to the desired location; as the dynamic model of the rigid body tumbles around and its orientation changes, possible descent directions of the cost function \eqref{cost} change and the control is able to push the system to the target. Controls for the first-order needle variation case \eqref{optimalu} are singular for a wider set of states than second-order needle variation controls \eqref{optcon} and, for this reason, they benefit more from tumbling.
In a 3D parallel locomotion task, only second-order variation controls \eqref{optcon} manage to provide control solutions through successive heave and roll inputs, whereas controls based on first-order sensitivities \eqref{optimalu} fail (see Fig.~\ref{SidewayMovement}). 
\\
\indent As controls in \eqref{optcon} are non-singular for a wider subset of the configuration state space than the first-order solutions in \eqref{optimalu}, they will provide more actions over a period of time and keep the system closer to a time-varying target. Fig. \ref{TrajTrack_drift} demonstrates the superior trajectory tracking behavior of controls based on \eqref{optcon} in the presence of +10~cm/s $\hat{y}$ fluid drift. The trajectory of the target is given by $[x, y, z]$=\medmuskip=0mu
\thinmuskip=0mu
\thickmuskip=0mu[20\,+10\,cos($\frac{\text{t}}{5}$)\,cos($\frac{\text{3t}}{10}$), 20\,+\,10\,cos($\frac{\text{t}}{5}$)\,sin($\frac{\text{3t}}{10}$), 10\,sin($\frac{\text{2t}}{5}$)], with $T=2$~s, $\lambda=0.01$, $Q~=~\text{diag}(10,10,10,0,0,0,0, 0, 0, 0, 1,1,0.1)$, $\gamma=-50000$, $P_1=\text{diag}(10,10,10,0,0,0,0, 0, 0, 0, 0, 0, 0)$, $R=\text{diag}(10^3,10^3,10^6,10^6)$ for \eqref{optimalu}, and $R=\text{diag}(10,10, 10^4,10^4)$ for \eqref{optcon}.\medmuskip=4mu
\thinmuskip=3mu
\thickmuskip=5mu~The simulation runs in real time using a C++ implementation on a laptop with Intel$^\circledR$ Core$^{\text{TM}}$ i5-6300HQ CPU @2.30GHz and 8GB RAM.
 The drift is known for both first- and second-order systems and accounted for in their dynamics in the form of $\dot{b} = \dot{b} + \dot{b}_\text{drift}$, where $\dot{b}_\text{drift}$ is a vector that points in the direction of the fluid flow. Simulation results demonstrate superior tracking of second-order needle variation controls that manage to stay with the target, whereas, in the meantime, the system that corresponds to first-order needle variation controls is being drifted away by the flow. \\
 \indent We also tested convergence success of the +10~cm/s $\hat{y}$ drift case. Initial conditions $x,y,z$ were sampled uniformly from a $30$~cm radius from the origin, discarding samples for which the initial distance was less than $5$~cm. We consider samples to be successful if, during 60 seconds of simulation, they approached the origin within 5~cm. Out of 500 samples, controls based on second-order variations converged 91\% of the time (with average convergence time of 5.87~s), compared to 89\% for first-order actions (with average convergence time of 9.3~s). Simulation parameters were \medmuskip=0mu
	 \thinmuskip=0mu
	 \thickmuskip=0mu $T=1$~s, $\lambda=10^{-4}$, $Q=10^{-3}\text{diag}(10,10,10,0,0,0,0, 1, 1, 1, 1,1,1)$, $P_1=\text{diag}(100,100,100,0,0,0,0, \frac{1}{2}, \frac{1}{2}, \frac{1}{2}, 0, 0, 0)$, $\gamma=-25000$, $R=\text{diag}(0.1,0.1,10^4,10^4)$ for \eqref{optimalu}, and $R=\frac{1}{2}\text{diag}(10^{-5},10^{-5}, 1,1)$ for \eqref{optcon}.
\section{Conclusion} 
This paper presents a needle variation control synthesis method for nonlinearly controllable systems that can be expressed in control affine form. Control solutions provably exploit the nonlinear controllability of a system and, contrary to other nonlinear feedback schemes, have formal guarantees to decrease the objective. By optimally perturbing the system with needle actions, the proposed algorithm avoids the expensive iterative computation of controls over the entire horizon that other NMPC methods use and is able to run in real time. 

Simulation results on three underactuated systems compare first- and second-order needle variation controls and demonstrate the superior convergence success rate of the proposed feedback synthesis. Because second-order needle variation actions are non-singular for a wider set of the state space than controls based on first-order sensitivity, they are also more suitable for time-evolving objectives, as demonstrated by the trajectory tracking examples in this paper. Second order needle variation controls are also calculated at little computational cost and preserve control effort. These traits, demonstrated in the simulation examples of this paper, render feedback synthesis based on second- and higher-order needle variation methods a promising alternative feedback scheme for underactuated and nonlinearly controllable systems. 
\section*{Acknowledgments}
This work was supported by the Office of Naval Research under grant ONR N00014-14-1-0594. Any opinions, findings, and conclusions or recommendations expressed here are those of the authors and do not necessarily reflect the views of the Office of Naval Research.
\clearpage
\bibliographystyle{IEEEtran}
\balance
\bibliography{references}

\end{document}